\newcommand{\vc}{\operatorname{VC}}
\newcommand{\Z}{\mathbb{Z}}
\newcommand{\R}{\mathbb{R}}
\newcommand{\inv}{^{-1}}
\newcommand{\eps}{\varepsilon}
\newcommand{\bd}{\partial}
\newcommand{\la}{\langle}
\newcommand{\ra}{\rangle}
\newcommand{\vol}{\operatorname{Vol}}
\newcommand{\area}{\operatorname{Area}}
\theoremstyle{plain}
\newtheorem{theorem}{Theorem}[section]
\newtheorem{corollary}[theorem]{Corollary}
\newtheorem{prop}[theorem]{Proposition}
\newtheorem{conj}[theorem]{Conjecture}
\theoremstyle{definition}
\newtheorem{defn}[theorem]{Definition}
\newtheorem{rem}[theorem]{Remark}
\numberwithin{equation}{section}
\begin{document}

\title[Pairs of PMC Spheres]{Pairs of Embedded Spheres with Pinched Prescribed Mean Curvature}
\author{Liam Mazurowski}
\address{Lehigh University, Department of Mathematics, Bethlehem, Pennsylvania 18015}
\email{lim624@lehigh.edu}

\author{Xin Zhou}
\address{Cornell University, Department of Mathematics, 212 Garden Ave, Ithaca, New York, 14853}
\email{xinzhou@cornell.edu}

\begin{abstract}
Assume $h$ is a positive function on the unit three-sphere which satisfies the pinching condition $h < h_0 \approx 0.547$. We prove the existence of at least two embedded two-spheres with prescribed mean curvature $h$.  The same result holds for sign-changing functions $h$ satisfying $\vert h\vert < h_0$ under a mild assumption on the zero set. 
\end{abstract}

\maketitle

\section{Introduction}

Let $h$ be a smooth function on a manifold $M$.  A hypersurface $\Sigma$ in $M$ is said to have prescribed mean curvature $h$ if $H_\Sigma = h|_\Sigma$.  Prescribed mean curvature hypersurfaces are a generalization of minimal hypersurfaces and constant mean curvature hypersurfaces, and have played a crucial role in recent breakthroughs in min-max theory \cite{wang2023existence},\cite{zhou2020multiplicity} and the study of scalar curvature \cite{chodosh2024generalized},\cite{gromov2020no}.  

Yau \cite[Problem 59]{yau1982problem} asked what conditions on a function $h\colon \R^3\to \R$ are necessary to ensure the existence of a closed surface of prescribed genus with mean curvature given by $h$.  Yau's problem has been studied by a number of authors \cite{caldiroli2004h},\cite{caldiroli2002existence},\cite{caldiroli2011bubbles},\cite{mazurowski2025prescribed},\cite{stryker2024min},\cite{treibergs1983embedded},\cite{yau1997remark}. In these papers, the non-compactness of $\R^3$ leads to many subtle difficulties.   Given this, it is natural to ask a similar question in closed manifolds.  If one does not insist on topological control, the second author and J. Zhu \cite{zhou2020existence} have given an essentially complete answer: a closed, prescribed mean curvature surface exists for every positive function $h$ and for generic sign-changing functions $h$. With topological control, the problem is more difficult and the existence theory is not yet complete, although there are some partial results. For example, Sarnataro and Stryker \cite{sarnataro2023optimal} have proven the existence of an embedded two-sphere with prescribed mean curvature $h$ in the unit three-sphere for generic prescription functions satisfying the pinching condition $\vert h\vert < h_0 \approx 0.547$.  Also, D.R.\ Cheng and the second author \cite{cheng2023existence} have proven that an arbitrary Riemmanian three-sphere contains a branched immersed two-sphere of  constant mean curvature $h\equiv c$ for almost-every $c$.

Beyond this, one can ask how many prescribed mean curvature surfaces exist in a given manifold.  A collection of conjectures postulate the existence of at least two solutions in various circumstances.  Arnold \cite{arnold2004arnold} famously conjectured the existence of two distinct curves with constant geodesic curvature (or more generally, two distinct magnetic geodesics) in any Riemannian two-sphere.  The Twin-Bubble Conjecture \cite{mazurowski2024infinitely},\cite{zhou2022mean} is a higher dimensional analog.  It asserts that in any closed Riemannian manifold $M$ of dimension $3\le n+1 \le 7$ there exist two distinct almost-embedded constant mean hypersurfaces of any given mean curvature. More generally, it is conjectured that for a generic function $h$ on $M$, there are always two distinct almost-embedded hypersurfaces with mean curvature given by $h$.  These conjectures are all still open.

In this paper, we show that under the same pinching condition as Sarnataro-Stryker \cite{sarnataro2023optimal}, there are always at least two distinct embedded two-spheres in $S^3$ of a given prescribed mean curvature. We define $h_0$ to be the real solution to 
\begin{equation}
\label{h0def}
    \pi h_0^3 + 2h_0^2 + 4\pi h_0 - 8 = 0
\end{equation}
and note that $h_0 \approx 0.547$.

\begin{theorem}
\label{main}
Let $S^3$ denote the unit three-sphere.  Assume that $h\colon S^3\to \R$ is admissible for the PMC min-max theory and that $\vert h\vert < h_0$.  Then there exist at least two distinct two-spheres smoothly embedded in $S^3$ with mean curvature given by $h$. 
\end{theorem}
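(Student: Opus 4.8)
The plan is to combine the existence of \emph{one} such sphere with a degree-theoretic argument showing that embedded two-spheres of prescribed mean curvature $h$ in $S^3$ occur in algebraically trivial families, so that a single solution is impossible. One could instead try to run a genuine two-parameter min-max, but since the natural parameter space for PMC min-max carries no higher topology this does not come for free, whereas the argument below needs only the same compactness input as Sarnataro--Stryker \cite{sarnataro2023optimal}. Work with the weighted-area functional $\mathbf A^h(\Omega)=\mathrm{Area}(\partial\Omega)-\int_\Omega h$ on Caccioppoli sets of $S^3$; for $h\not\equiv 0$ its critical points with smoothly embedded boundary correspond bijectively to the embedded PMC-$h$ two-spheres (each such sphere bounds two balls and is critical at exactly one of them). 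For generic admissible $h$ with $|h|<h_0$ I would first show — using the genericity already present in \cite{sarnataro2023optimal}, or a bumpy-type perturbation — that the set $\mathcal M(h)$ of these critical points is finite and nondegenerate, and set $N(h):=\sum_{\Omega\in\mathcal M(h)}(-1)^{\operatorname{ind}_{\mathbf A^h}(\Omega)}$.

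The load-bearing ingredient is a compactness statement: if $\sup_t|h_t|<h_0$ along a path $(h_t)$, then any sequence of embedded PMC-$h_{t_i}$ two-spheres — with a uniform area bound, which also has to be established under the pinching — subconverges, as embedded surfaces of multiplicity one, to an embedded PMC-$h_t$ two-sphere, with no loss of genus, no sheeting, and no disconnection. This is precisely where the root $h_0$ of \eqref{h0def} is used: above $h_0$ the isoperimetric- and Frankel-type inequalities in $S^3$ that forbid these degenerations break down, exactly as in \cite{sarnataro2023optimal}. Granting it, the solution set moves in a compact family along any path inside the convex set $\{|h|<h_0\}$, and a standard cobordism argument shows that $N(h)$ is unchanged along such paths, even across non-generic parameters.

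It then suffices to evaluate $N$ at one convenient prescription, and I would take $h=c+\varepsilon f$ with $c$ a small nonzero constant and $f$ generic. The PMC-$c$ two-spheres form an $S^3$-family — the geodesic spheres of mean curvature $c$, one per center — which is a Morse--Bott critical manifold of $\mathbf A^c$ of index one, whose three null directions are the degree-one normal modes, i.e.\ exactly the directions tangent to the family, and with all higher modes strictly stable (a direct computation with the spectrum of the geodesic sphere). A Lyapunov--Schmidt reduction identifies the PMC-$(c+\varepsilon f)$ two-spheres near this family, for $\varepsilon$ small, with the critical points of the reduced functional, which to leading order is $p\mapsto-\varepsilon\int_{B_p}f$ on $S^3$ ($B_p$ the ball bounded by the mean-curvature-$c$ geodesic sphere about $p$) and is Morse for generic $f$; by the compactness statement these exhaust $\mathcal M(c+\varepsilon f)$. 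A sign count via Poincaré--Hopf on $S^3$ then gives $N(c+\varepsilon f)=\chi(S^3)=0$, hence $N(h)=0$ for every generic admissible $h$ with $|h|<h_0$. Since \cite{sarnataro2023optimal} (or a limiting argument for the remaining admissible $h$) provides at least one embedded PMC-$h$ two-sphere, and a single nondegenerate one would contribute $\pm1$, there must be at least two. For the remaining admissible $h$ one approximates by generic $h_i\to h$, keeps two solutions for each, and passes to the limit via the compactness statement; the only subtle point is that the two sequences cannot converge to the same sphere, which follows because a common limit would have to carry local degree $0$, incompatible with a nearby nondegenerate solution of sign $\pm1$ produced by a further small perturbation. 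The sign-changing case runs identically, the hypothesis on the zero set of $h$ being invoked exactly where it is needed to set up the PMC min-max and the compactness theorem.

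The main obstacle is the compactness theorem at the threshold: ruling out multiplicity-two and genus-one limits of embedded PMC-$h$ two-spheres in $S^3$, and securing the a priori area bound preceding it, when $|h|<h_0$ — this is what ties the whole argument to the root of \eqref{h0def}. The transverse index computation feeding the Lyapunov--Schmidt step, and the non-merging of the two solutions in the non-generic limit, are the remaining points demanding care.
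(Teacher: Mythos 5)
Your proposal is a genuinely different route from the paper's. The paper never counts critical points: it runs a $4$-parameter min-max over the family $\Phi(p,r)=(\partial B_p(r),B_p(r))$, uses a Lusternik--Schnirelmann cup-product argument to extract a subcomplex $Y\subset S^3\times[\delta,\pi-\delta]$ homologous to the fiber $S^3\times\{\tfrac{\pi}{2}\}$ on which the critical sequence stays $\mathbf F$-close to the unique solution $\Gamma$, and then derives a contradiction from the Smale conjecture (via a filigree-retraction interpolation theorem) because $\Psi_{i*}[Y]=\Phi_*[S^3\times\{\tfrac\pi2\}]$ is the nonzero class of $H_3(\mathcal E,\Z_2)$ yet must vanish. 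Your scheme instead sets up a Morse/degree invariant $N(h)$, computes it to be $0$ by Lyapunov--Schmidt near $h=c+\varepsilon f$, and combines that with existence. The local index-1/nullity-3 computation for the geodesic sphere and the Poincar\'e--Hopf count $N=-\chi(S^3)=0$ are correct, and in a world where the compactness input held, this would be an attractive and in some ways more quantitative argument.

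However, there are two genuine gaps. First, the ``load-bearing ingredient'' you name is not merely delicate but, as far as I can tell, unavailable: the pinching $|h|<h_0$ does \emph{not} supply an a priori area bound on all embedded PMC-$h$ two-spheres. In \cite{sarnataro2023optimal} and in this paper, $h_0$ is used to control min-max critical sequences, whose mass is already bounded by the width $L(\Pi)<4\pi+2\pi^2h_0$; it is precisely this width bound that feeds the monotonicity/density argument (Proposition~\ref{density}) to rule out touching. For an arbitrary embedded PMC-$h$ sphere there is no such bound, so your compactness, the finiteness of $\mathcal M(h)$, and the cobordism-invariance of $N$ are all unsupported. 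One of the points of the paper's approach is that it sidesteps exactly this: min-max only ever hands you solutions with controlled area, and the Smale conjecture plus the interpolation theorem turn ``the min-max solution is always $\Gamma$'' into a homological contradiction without any global compactness.

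Second, the passage from generic to non-generic admissible $h$ is flawed. If two nondegenerate solutions of opposite sign for $h_i\to h$ merge at $\Sigma_\infty$, the common limit having local degree $0$ is exactly the signature of a birth--death cancellation, and it is entirely consistent for a small further perturbation to have \emph{no} solution near $\Sigma_\infty$. The claimed ``nearby nondegenerate solution of sign $\pm 1$'' does not exist in general, so no contradiction arises, and the limit may well be a single (degenerate) sphere. The paper avoids this by applying the PMC min-max (Theorem~\ref{min-max}) directly to any admissible $h$ --- no approximation by generic $h$ is needed --- so the two-solution conclusion is obtained for the full admissible class in one stroke. To salvage your route for non-generic $h$ you would need either a local degree lower bound at the limit (which $0$ does not give) or a direct argument bypassing the generic approximation.
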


The proof of the theorem uses the Simon-Smith min-max theory for the prescribed mean curvature functional developed in \cite{sarnataro2023optimal} and further refined in \cite{wang2023existence}.  It also relies on the Smale conjecture \cite{hatcher1983proof}, which implies that the space $\mathcal E$ of oriented, smoothly embedded two-spheres in $S^3$ is homotopy equivalent to $S^3$.  Intuitively, this implies that the $\mathcal A^h$ functional should have at least two critical points on $\mathcal E$, corresponding to the two prescribed mean curvature two-spheres detected in Theorem \ref{main}.   

The admissibility hypothesis in the theorem is very mild and holds for every positive function $h$ and for generic sign-changing functions $h$.  As in \cite{sarnataro2023optimal}, the pinching condition on $h$ is used to guarantee that solutions are smooth and embedded.  An essential feature of the theorem is the explicit constant $h_0$, as it should be possible to obtain the same result via perturbation methods when $h$ satisfies $\vert h\vert < \eps_0$ for some small (inexplicit) $\eps_0$. We expect that there should be counter-examples to the existence of even one embedded solution if the pinching condition is dropped entirely; see the ensuing discussion. 

\begin{rem}
    We note that the Smale conjecture has also motivated several recent breakthroughs on the existence of minimal surfaces with prescribed topology in three manifolds with positive Ricci curvature; see \cite{chu2025minimal},\cite{chu2024existence},\cite{chu2025existence},\cite{li2024minimal},\cite{li2024existence},\cite{wang2023existence}.
\end{rem}

\subsection{Motivation} The following is a well-known conjecture of Arnold \cite{arnold2004arnold}.  It has connections with many different areas of mathematics, including symplectic geometry, dynamical systems, and min-max theory.

\begin{conj}
Let $g$ be a Riemannian metric on $S^2$.  Then for any $\kappa > 0$, there are two closed curves in $(S^2,g)$ with constant geodesic curvature $\kappa$. 
\end{conj}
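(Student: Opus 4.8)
The plan is to follow the blueprint behind Theorem~\ref{main} one dimension down, with embedded closed curves in $(S^2,g)$ replacing embedded two-spheres in $S^3$ and Smale's theorem $\operatorname{Diff}(S^2)\simeq O(3)$ replacing the Smale conjecture. First I would fix the configuration space $\Omega$ of oriented, smoothly embedded closed curves in $S^2$, each regarded together with the open disk $\Omega_\gamma$ it bounds on its positive side; Smale's theorem together with the smooth Schoenflies theorem shows that $\Omega$ deformation retracts onto the space of oriented great circles, so $\Omega\simeq S^2$. On $\Omega$ one studies the one-dimensional prescribed-curvature functional $\mathcal A^\kappa(\gamma)=\operatorname{Length}_g(\gamma)-\kappa\,\area_g(\Omega_\gamma)$, whose critical points are precisely the embedded curves of constant geodesic curvature $\kappa$; its first variation in the normal direction $\varphi\nu$ is $\int_\gamma(k_\gamma-\kappa)\,\varphi\,ds$. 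Since $\mathcal A^\kappa$ is, heuristically, a function on a space homotopy equivalent to $S^2$, it ought to have at least two critical points, and these would be the two desired curves.

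Second, I would realize this heuristic by a Simon--Smith min-max construction for $\mathcal A^\kappa$. The generator of $H_2(\Omega)\cong\Z$, represented concretely by the family $v\mapsto C_v$ of oriented great circles, yields a first min-max value $W^\kappa$ and an associated critical curve, the one-dimensional counterpart of what produces a single embedded PMC sphere in Theorem~\ref{main}. A second critical curve would then be extracted by a Lusternik--Schnirelmann-type argument: were there only one critical value among honest (non-degenerate) curves, a deformation retraction would force the topology of $\Omega$ to be simpler than that of $S^2$. Here the regularity step is elementary, since in dimension one a min-max varifold of $\mathcal A^\kappa$ is automatically an integer multiple of a smooth closed curve of geodesic curvature $\kappa$, and a maximum-principle argument, using that it is a limit of embedded curves, makes it embedded with multiplicity one for generic $\kappa$; in particular no pinching hypothesis on $\kappa$ is needed, in contrast with Theorem~\ref{main}. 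What remains is to show that the two min-max values are positive and bounded away from the spurious limits, namely curves collapsing to a point, where $\mathcal A^\kappa\to 0$, or enclosing almost all of $S^2$, where $\mathcal A^\kappa\to-\kappa\,\area_g(S^2)$, and, crucially, that they are strictly separated, so that the two critical curves are distinct. Positivity and avoidance of the degenerate limits would follow from isoperimetric lower bounds for the length of a curve enclosing a region of intermediate area.

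The hard part, and the reason Arnold's conjecture remains open, is precisely this last point: the strict separation of the two min-max values together with the exclusion of a degenerate coincidence of the two solutions. It is the one-dimensional analogue of the Lusternik--Schnirelmann inequality that Marques--Neves established for the area functional, and the non-symmetry of $\mathcal A^\kappa$ makes it genuinely delicate. For $\kappa$ large the situation is favorable, since curves of constant geodesic curvature $\kappa$ are small and nearly round and a parametrized Morse-theory or degree argument on $\Omega$ produces two of them. The delicate regime is $\kappa$ small, where $\mathcal A^\kappa$ is a small, non-symmetric perturbation of the length functional: at $\kappa=0$ the length functional is invariant under interchanging a disk with its complement, descends to $\mathbb{RP}^2$, and Lusternik--Schnirelmann yields at least three simple closed geodesics; but this symmetry is broken for $\kappa\neq 0$, and as $g$ and $\kappa$ vary a saddle-type critical value can collide with a neighboring one or slide off to a degenerate limit, a point or a double cover of a geodesic, collapsing the two min-max values onto a single curve. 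Ruling this out, for instance by a quantitative lower bound on the gap between the two values, by controlling the Morse indices of the min-max curves, or by an equivariant refinement when the metric has symmetry, is the crux of the problem.
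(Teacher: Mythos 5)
This statement is Arnold's conjecture, which the paper explicitly records as open; the paper offers no proof of it, so there is nothing in the text to compare your argument against. What you have written is not a proof but a strategy sketch, and to your credit you say so yourself: you identify the unresolved step (separating the two min-max values and excluding a degenerate coincidence) and acknowledge that this is exactly where the problem remains open. That honesty is appropriate. Two corrections to the sketch itself, though. First, the claim that "the regularity step is elementary" in dimension one is too optimistic. A one-dimensional min-max limit of $\mathcal A^\kappa$ is a priori only a stationary $1$-varifold with bounded generalized mean curvature, and such objects can be multiply covered circles, geodesic nets, or figure-eight configurations; the tangential touching of two arcs with the same constant geodesic curvature is not ruled out by the maximum principle when orientations disagree. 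This is precisely the issue that forces the explicit pinching $\lvert h\rvert<h_0$ in Theorem~\ref{main} (via Proposition~\ref{density}), and that forces quarter-pinching in Schneider's $2$-dimensional results cited in the paper. You cannot wave it away and then claim a result for all $\kappa>0$ with no curvature hypothesis on $g$.

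Second, your framing of the two-solution step as "strict separation of two min-max values" does not actually mirror what the paper does in Theorem~\ref{main}. There the one-parameter widths $L(\Pi_p)$ and the four-parameter width $L(\Pi)$ are all shown to be \emph{equal} to $\mathcal A^h(\Gamma,\Theta)$; the contradiction is purely topological, obtained by building a $3$-cycle $Y$ on which $\Psi_i$ stays $\mathbf F$-close to $\Gamma$, invoking the interpolation theorem to retract $\Psi_i\rvert_Y$ onto the constant map, and comparing with $\Phi_*[S^3\times\{\tfrac\pi2\}]\neq 0$ in $H_3(\mathcal E,\Z_2)$. The analogue for Arnold would be to assume a unique embedded solution, run a $1$-parameter and a $3$-parameter sweepout of $S^2$ (with parameter space $S^2\times[\delta,\pi-\delta]$), and derive a contradiction from $H_2$ of the space of embedded circles; a gap between two critical values is never produced or needed. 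Finally note that the conjecture as stated allows immersed curves, while you restrict to embedded ones, which is a genuinely stronger assertion and, by analogy with the Torralbo counterexample in $S^3$ discussed in the paper, plausibly false without a pinching hypothesis.
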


Arnold's conjecture is still open, although there is some progress due to Schneider \cite{schneider2011closed}, who proved the existence of two solutions for every $\kappa >0$ when the metric $g$ has quarter pinched positive curvature.  Schneider \cite{schneider2012alexandrov} also proved the existence of two Alexandrov embedded solutions when the metric is non-negatively curved. (An immersion of the circle is called an Alexandrov embedding if it extends to an immersion of a disk.)  Finally we mention that Rosenberg-Schneider \cite{rosenberg2011embedded} proved that a two-sphere with positive Gaussian curvature has two embedded curves with constant geodesic curvature $\kappa$ for all $\kappa \in [0,\eps_0)$ for some small (inexplicit) constant $\eps_0$. 

The existence theory for even a single embedded solution is not fully understood for arbitrary metrics $g$, although Novikov \cite{novikov1982hamiltonian} conjectures there should always be  a solution.  In this regard, we mention that Sarnataro-Stryker \cite{sarnataro2023existence}, building on the methods of \cite{ketover2023existence} and \cite{zhou2020min}, obtained the existence of a single embedded solution for every $\kappa > 0$ under the assumption that the metric has one-eighth  pinched positive curvature.  For general Riemannian metrics, the best known result \cite{asselle2016lusternik},\cite{cheng2021existence} is that there is a single closed {\it immersed} solution for {\it almost-every} $\kappa > 0$. 

In ambient dimension 3, constant mean curvature surfaces are a natural analog to curves with constant geodesic curvature.  Rosenberg-Smith \cite{rosenberg2020degree} made the following conjecture on the existence of CMC two-spheres in positively curved three-spheres. 

\begin{conj}
For any $c > 0$ and any metric $g$ on $S^3$ with positive sectional curvature, there is an embedding of $S^2$ into $S^3$ with constant mean curvature $c$. 
\end{conj}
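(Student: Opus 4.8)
The Rosenberg--Smith conjecture is, to our knowledge, still open; the line of attack we would pursue builds directly on the min-max technology behind Theorem~\ref{main}. Fix a metric $g$ on $S^3$ with $\sec \ge \kappa > 0$ and fix $c > 0$. First I would run the Simon--Smith min-max for the functional $\mathcal A^c(\Omega) = \area(\bd\Omega) - c\,\vol(\Omega)$ over one-parameter sweepouts of $(S^3,g)$ by embedded two-spheres, as in \cite{zhou2020existence},\cite{sarnataro2023optimal},\cite{wang2023existence}. Since the Smale conjecture \cite{hatcher1983proof} identifies the space $\mathcal E$ of embedded two-spheres with $S^3$, such sweepouts exist and represent a non-trivial class, so the width $\omega(c,g) > 0$ is realized by a non-trivial, almost-embedded surface $\Sigma$ of constant mean curvature $c$; the Simon--Smith genus bound forces every component of $\Sigma$ to have genus zero, and the PMC min-max regularity theory gives multiplicity one.

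What remains is to show that $\Sigma$ is a single, smoothly embedded two-sphere, and this is where positive curvature must take over the role played by the pinching hypothesis in Theorem~\ref{main}. By Bonnet--Myers and Bishop--Gromov, $\sec \ge \kappa$ bounds $\vol(S^3,g)$ and $\diam(S^3,g)$, and sweeping out by the distance spheres of a point then bounds $\omega(c,g)$ from above in terms of $\kappa$ alone; on the other hand, the monotonicity formula for surfaces of bounded mean curvature forces every closed CMC-$c$ surface to have area bounded below by a positive constant $c_1(\kappa,c)$. Combining the two estimates caps the number of components of $\Sigma$, and a connectedness argument in the spirit of the no-neck results for min-max surfaces in positively curved ambients should force exactly one component. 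To upgrade almost-embeddedness to embeddedness one would re-examine the computation behind the constant $h_0$, now with $\sec \ge \kappa$ in place of the pinching on $h$: a genus-zero, low-index, almost-embedded CMC-$c$ surface in a sufficiently positively curved $S^3$ should not be able to develop a genuine tangency, since a small piece near a would-be touching point would violate a Heintze--Karcher/Ros-type estimate available under $\sec > 0$.

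The step I expect to be the main obstacle is exactly this last one: isolating a \emph{quantitative} curvature condition --- uniform over all positively curved $g$ and all $c > 0$ --- that simultaneously guarantees connectedness and embeddedness of the min-max surface. The known embeddedness theorems (Sarnataro--Stryker) require $|h| < h_0$ measured against the round metric, and no comparably clean hypothesis phrased purely in terms of $\sec$ is presently available; the extreme regimes $c \to 0$ and $c \to \infty$ look the most delicate, since there $\Sigma$ is either very large or very small and the local model degenerates. A resolution would likely demand a genuinely new interior estimate for stable CMC pieces under positive ambient curvature, or a hybrid with the degree-theoretic scheme of Rosenberg--Smith \cite{rosenberg2020degree} --- which parametrizes nearby CMC spheres and computes a mapping degree, avoiding regularity issues but inheriting its own compactness problems at the ends of the $c$-interval.
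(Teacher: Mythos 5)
The fundamental problem is that the statement you are trying to prove is \emph{false}, and the paper says so explicitly two sentences after stating the conjecture. Torralbo's classification of the CMC two-spheres in Berger spheres shows that there exists a $c>0$ and a positively curved Berger metric $g_{\mathrm{Ber}}$ on $S^3$ for which no embedded two-sphere of constant mean curvature $c$ exists (though an immersed one does). The paper records this conjecture of Rosenberg--Smith only in order to immediately refute it and motivate the corrected version, which replaces ``embedding'' by ``immersion.'' So any argument purporting to establish the original statement must have a gap, and in your sketch the gap sits exactly where you flag the ``main obstacle'': upgrading the almost-embedded min-max output to an embedding. No amount of Heintze--Karcher or Ros-type estimate under $\sec > 0$ can close this, because Berger spheres are positively curved and still produce the counterexample. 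The min-max machinery you invoke is the right tool for the \emph{immersed} version (and indeed Cheng--Zhou prove a branched immersed CMC sphere exists for every $c$ under positive Ricci), but embeddedness genuinely fails without a pinching hypothesis of the kind used in the paper's Theorem~\ref{main}. You should have checked the Torralbo reference before committing to a proof strategy: the correct ``proof'' here is a one-line citation of a counterexample, not a construction.

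There is also a secondary issue worth noting: even setting aside the counterexample, the step where you claim ``the PMC min-max regularity theory gives multiplicity one'' is not automatic for the CMC case $h\equiv c$; the multiplicity-one and good-regularity results in the PMC theory typically require either $h>0$ or an admissibility/genericity condition, and the constant function $h\equiv c$ is on the boundary of what those hypotheses cover. This is precisely why Cheng--Zhou only obtain branched immersions for almost-every $c$ in the general metric case. So the regularity bookkeeping in your sketch is optimistic even before the embeddedness question arises.
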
 

In fact, the conjecture as stated is not true.  Torralbo \cite{torralbo2010rotationally} has classified all of the constant mean curvature two-spheres in the Berger spheres, and it turns out that there is a constant $c > 0$ and a  (positively curved) Berger sphere $(S^3,g_{\text{Ber}})$ for which there is no embedded two-sphere with constant mean curvature $c$.  Nevertheless, every Berger sphere contains an {\it immersed} two-sphere of any given constant mean curvature.  Thus, it is natural to make the following modified conjecture; c.f. \cite{zhou2022mean}. 

\begin{conj}
For any $c > 0$ and any metric $g$ on $S^3$, there is an {\it immersion} of $S^2$ into $S^3$ with constant mean curvature $c$. 
\end{conj}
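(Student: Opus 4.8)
The plan for attacking this conjecture is to run a Simon--Smith min-max for the weighted-area functional $\mathcal{A}^c(\Omega)=\area(\partial\Omega)-c\,\vol(\Omega)$, whose critical points are exactly the almost-embedded two-sided hypersurfaces of constant mean curvature $c$, over sweepouts of $(S^3,g)$ by oriented embedded two-spheres. This is the set-up of Theorem~\ref{main} and of \cite{sarnataro2023optimal},\cite{cheng2023existence}, with two modifications: since $h\equiv c>0$ is a positive constant the admissibility hypothesis is automatic, and since we ask only for one (possibly non-embedded) solution we drop the pinching condition $c<h_0$, thereby losing the sharp embedded regularity of \cite{sarnataro2023optimal}. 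First I would verify that the min-max width $\omega(c):=\inf_{\{\Omega_t\}}\sup_t\mathcal{A}^c(\Omega_t)$ is positive for every $c>0$: every sweepout contains a slice $\Omega_{t_0}$ enclosing exactly half the volume of $(S^3,g)$, and for such a slice $\area(\partial\Omega_{t_0})$ is bounded below by a fixed positive constant depending only on $(S^3,g)$, while $c\,\vol(\Omega_{t_0})\le c\,\vol(S^3,g)$; this already gives $\omega(c)>0$ for small $c$, and positivity in general is the width estimate of \cite{zhou2020existence},\cite{cheng2023existence}. Positivity of the width excludes the trivial critical point, and the PMC min-max machinery of \cite{sarnataro2023optimal},\cite{wang2023existence} then produces a nontrivial min-max critical varifold $V_c$ of generalized mean curvature $c$, obtained as a varifold limit of a maximizing sequence of embedded two-spheres.

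The next step is to recover the sphere topology, and here one leans on \cite{cheng2023existence}. By the PMC regularity theory in ambient dimension three, the support of $V_c$ is a smooth almost-embedded constant mean curvature surface (immersed, with at worst tangential self-contact); without the pinching hypothesis it need not be connected, embedded, of genus zero, or of multiplicity one. Tracking the degeneration of the maximizing sequence of \emph{embedded} two-spheres, however, one shows that $V_c$ together with the pieces that bubble off along necks organizes into a finite collection of branched immersed constant mean curvature $c$ two-spheres, at least one of which is nonconstant since $\M(V_c)>0$. For almost every $c$ this is the theorem of \cite{cheng2023existence}, and it already proves the conjecture if branched immersions are allowed.

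To upgrade from almost every $c$ to every $c$, fix $c_0>0$, pick $c_n\to c_0$ in the full-measure set, and let $\Sigma_{c_n}$ be the corresponding branched immersed constant mean curvature $c_n$ two-spheres. The map $c\mapsto\omega(c)$ is non-increasing and $\vol(S^3,g)$-Lipschitz --- for a fixed sweepout the function $c\mapsto\sup_t\mathcal{A}^c(\Omega_t)$ has both properties, and both pass to the infimum --- so $\area(\Sigma_{c_n})$ is bounded uniformly in $n$; the Gauss equation together with Gauss--Bonnet then bound $\int_{\Sigma_{c_n}}|A|^2$ by $c_n^2\,\area(\Sigma_{c_n})+C$ with $C=C(S^3,g)$ (for an unbranched sphere one has the identity $\int|A|^2=c_n^2\,\area-8\pi+2\int K_g$, and branch points only decrease the left-hand side). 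With uniform area and $\int|A|^2$ bounds the $\Sigma_{c_n}$ subconverge --- as varifolds, and smoothly away from finitely many points --- to a finite collection of branched immersed constant mean curvature $c_0$ two-spheres, and $\omega(c_n)\to\omega(c_0)>0$ keeps the limit nontrivial. The delicate point is controlling this degeneration: one must rule out that all of the area escapes to a single point of $(S^3,g)$, and must check that the necks forming along the sequence are genuine Delaunay-type pieces rather than pathologies. I would handle this with a no-concentration estimate --- competitor sweepouts forcing $V_{c_n}$ to carry a definite amount of area outside every fixed small ball, uniformly in $n$ --- together with the neck analysis for constant mean curvature surfaces.

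The hard part will be the final step: removing the branch points of the resulting branched immersion $f\colon S^2\to(S^3,g)$ to obtain a genuine immersion. There is no branch-point removal theorem for constant mean curvature two-spheres in a general Riemannian three-manifold, in contrast with the classical situation for minimal surfaces in $\R^3$, which is precisely why the conjecture, stated with a true immersion, is still open. The natural attempt --- perturb $g$ to a generic metric $g'$ for which the construction yields an honest immersion, then let $g'\to g$ --- fails a priori because branch points can re-form in the limit; making it work would require a lower bound on a focal or injectivity scale of the immersions that stays uniform as $g'\to g$, and I expect establishing such a bound to be the principal difficulty.
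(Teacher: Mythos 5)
This is one of several open conjectures the paper records as motivation; the paper gives no proof of it, so there is nothing internal to compare your argument against. What you have written is a program for attacking it, not a proof, and you are candid about that.

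Two caveats on the program itself, beyond the gap you already flag. You pass from the Simon--Smith PMC min-max varifold $V_c$ to a collection of branched CMC two-spheres by ``tracking the degeneration'' of the embedded sweepout, crediting the a.e.-$c$ case to \cite{cheng2023existence}; but \cite{cheng2023existence} works in a mapping (perturbed-energy, harmonic-replacement) framework, not the varifold framework of \cite{sarnataro2023optimal},\cite{wang2023existence}, and without a pinching hypothesis like $|h|<h_0$ to force density one everywhere and hence embeddedness, there is no known mechanism that recovers sphere topology from the PMC min-max varifold --- that step is itself open in the GMT setting rather than a cited fact. Your limiting argument $c_n\to c_0$ is also missing its two essential estimates, the no-concentration bound and the neck analysis, which are precisely what makes the upgrade from a.e.\ $c$ to all $c$ nontrivial. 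Finally, the decisive obstruction you identify correctly: even a full completion of the preceding stages would only produce a branched immersion, and the conjecture asks for an immersion; there is no branch point removal theorem for CMC two-spheres in a general Riemannian three-sphere, and the metric-perturbation strategy you sketch needs a uniform lower bound on some injectivity or focal scale as $g'\to g$ that is not available. Your proposal locates the obstruction but does not overcome it, so the conjecture remains open after the argument, as it does in the paper.
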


D.R. Cheng and the second author \cite{cheng2023existence} proved that if $(S^3,g)$ has positive Ricci curvature, then there is a branched immersion of $S^2$ into $S^3$ of any given constant mean curvature.  In fact, when $g$ is an arbitrary metric on $S^3$, they also proved the existence of a branched immersion with constant mean curvature $c$ for almost every $c>0$.   The above discussion highlights the fact that one cannot in general expect to obtain the existence of embedded surfaces with prescribed non-zero mean curvature and controlled topology without some form of pinching condition.   We further remark that, in light of the Arnold conjecture, it is reasonable to conjecture the existence of a second solution. 

\begin{conj}
For any $c > 0$ and any metric $g$ on $S^3$, there are at least two immersions of $S^2$ into $S^3$ with constant mean curvature $c$. 
\end{conj}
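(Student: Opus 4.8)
The plan is to run a Simon--Smith min-max existence theory for constant mean curvature two-spheres, in the spirit of the proof of Theorem~\ref{main}, but carried out in the category of branched immersions rather than embeddings so as to dispense with the pinching hypothesis. For a fixed $c>0$ one works with the weighted area functional $\mathcal A^c(\Omega)=\area(\partial\Omega)-c\,\vol(\Omega)$ on Caccioppoli subsets $\Omega\subset(S^3,g)$; among sets whose reduced boundary is a smooth two-sphere the critical points of $\mathcal A^c$ are exactly the embedded CMC-$c$ spheres, and a Simon--Smith min-max for $\mathcal A^c$ produces integral varifolds satisfying the $\mathcal A^c$-stationarity condition with control on genus. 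By Cheng--Zhou \cite{cheng2023existence}, a one-parameter min-max of this type already yields, for almost every $c$, at least one branched immersed CMC-$c$ two-sphere. The two ingredients that must be added are: upgrading ``almost every $c$'' to ``every $c$'', and producing a second, geometrically distinct solution.

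For the second solution I would exploit the Smale conjecture \cite{hatcher1983proof}, exactly as in Theorem~\ref{main}: the space $\mathcal E$ of oriented embedded two-spheres in $S^3$ is homotopy equivalent to $S^3$. Compactify $\mathcal E$ by adjoining the two degenerate spheres $p_-$ (empty region) and $p_+$ (full region), on which $\mathcal A^c$ takes the values $0$ and $-c\,\vol(S^3,g)$; a short computation shows that both of $p_\pm$ are local minima of $\mathcal A^c$ on $\widehat{\mathcal E}\simeq S^3$. A Morse-theoretic (or Lusternik--Schnirelmann) count on $S^3$ then shows that a functional with two local minima must possess at least two further critical points --- morally an index-one ``mountain pass'' and an index-three critical point, reflecting that $S^3$ cannot be assembled from fewer cells. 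Made rigorous through the Almgren--Pitts deformation theory and pull-tight construction used in \cite{sarnataro2023optimal},\cite{wang2023existence}, this produces min-max widths $0<\omega_1\le\omega_2$ together with associated $\mathcal A^c$-stationary varifolds; $\omega_1$ recovers the immersed analogue of the Sarnataro--Stryker solution and $\omega_2$ is the new one. If $\omega_1<\omega_2$ the two varifolds are automatically distinct; if $\omega_1=\omega_2$ a Lusternik--Schnirelmann argument forces the critical set at that level to be topologically nontrivial, and I would extract two distinct CMC-$c$ spheres from it, using a multiplicity-one statement to guarantee that each width is realized by an honest (branched immersed) sphere rather than a nontrivial multiple cover.

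The remaining difficulty is regularity of the min-max varifolds. Without a pinching condition one cannot expect embeddedness --- indeed, by Torralbo's classification \cite{torralbo2010rotationally} embedded CMC-$c$ spheres genuinely fail to exist for some $(c,g)$ --- so one must argue that the varifolds are branched immersed spheres (Cheng--Zhou's regularity theory), rule out the degenerate possibilities (collapse to a point is excluded by the lower area bound $\omega_i>0$; collapse to a multiply covered minimal sphere and neck-pinching must be excluded by genus and multiplicity-one considerations), and finally remove branch points to obtain genuine immersions. I expect the main obstacle to be precisely the combination of (i) promoting the result from a.e.\ $c$ to every $c$, which requires controlling the widths and the geometry of their critical varifolds as $c$ varies, and (ii) establishing the multiplicity-one and distinctness statements for the branched immersed limits. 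Because the clean topology of $\mathcal E$ cannot by itself force an \emph{embedded} solution to exist for every $(c,g)$, the argument must confront branched immersed varifolds head-on at every parameter value, and it is this analytic core --- not the topological counting --- where the proof will stand or fall.
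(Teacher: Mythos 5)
The statement you are trying to prove is a \emph{conjecture} in the paper (one of several listed in the motivation section), not a theorem; the paper offers no proof and explicitly leaves it open. Your writeup is therefore a research sketch rather than a proof, and --- to your credit --- you identify the key obstacles yourself. Still, let me point at where the sketch would actually break, since some of the gaps are more structural than you acknowledge.

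The central conflation is between two distinct min-max technologies. The Simon--Smith/Almgren--Pitts machinery used in \cite{sarnataro2023optimal} and \cite{wang2023existence} (and in the paper's Theorem~\ref{main}) operates on currents/varifolds and produces \emph{embedded or almost-embedded} surfaces; topological control (e.g.\ that the limit is a sphere and multiplicity one) is precisely what the pinching condition $|h|<h_0$ buys, via the density estimate of Proposition~\ref{density}. The Cheng--Zhou result \cite{cheng2023existence} is an entirely different min-max in the mapping space $C^0\cap W^{1,2}(S^2,S^3)$ in the Sacks--Uhlenbeck tradition, which produces \emph{branched immersions} with sphere topology built in, but whose critical sequences are weakly continuous maps, not varifolds. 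One cannot ``run the GMT min-max in the category of branched immersions'' as a drop-in replacement: the deformation theorems, the pull-tight construction, and the homotopy class $\Pi$ all live in $\vc(S^3)$ or $\mathcal E$, while the objects you want to converge to live in a mapping space. Your proposed argument has no interface between the two.

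Even granting that interface, the Morse/Lusternik--Schnirelmann count you invoke does not close the gap. On $\widehat{\mathcal E}\simeq S^4$ (adjoining \emph{two} cone points to an $S^3$ gives an $S^4$, not an $S^3$, so even the topology of the compactification is off), the functional $\mathcal A^c$ is neither smooth nor Palais--Smale in any topology in which $\widehat{\mathcal E}$ is compact; this is exactly why the paper's Theorem~\ref{main} routes the topological argument through the interpolation theorem (Theorem~\ref{interpolation}) and a delicate Lusternik--Schnirelmann argument (Proposition~\ref{LS}) in the $\vc$ topology, and still requires the pinching hypothesis to make the critical set consist of honest spheres. Without pinching, the min-max varifold at each width can be a multiply covered minimal sphere, a higher-genus surface, or a disconnected union, and the Torralbo examples you cite are exactly evidence that this degeneracy is real. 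The passage from a.e.\ $c$ to every $c$, the multiplicity-one statement, and the distinctness of the two critical varifolds are not finishing touches but the full content of the conjecture; your sketch does not supply any new mechanism for them.
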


The results discussed so far consider simple curvature prescription functions $h\equiv c$ but allow the metric to vary.  It is also interesting to consider the ``dual'' problem where the metric is simple, but the curvature prescription function is allowed to be non-constant.  To this end, Sarnataro-Stryker \cite{sarnataro2023optimal} have considered the problem of finding embedded two-spheres with prescribed mean curvature in the unit three-sphere.  They show that for a generic function $h\colon S^3\to \R$ satisfying the pinching condition $\vert h\vert < h_0$, there is an embedding of $S^2$ in $S^3$ with mean curvature $h$.  Again, by analogy with Arnold's conjecture, it is natural to expect the existence of a second solution under the same hypotheses.  Our main result Theorem \ref{main} confirms that this is indeed the case.  While it is unclear whether the pinching constant $h_0$ is sharp, we believe there should be counter-examples to the existence of embedded solutions if one drops the pinching condition entirely.  On the other hand, we conjecture that there should always be at least two immersed solutions. 

\begin{conj}
For a generic function $h\colon S^3\to \R$ on the unit three-sphere, there should be at least two immersions of $S^2$ into $S^3$ with mean curvature prescribed by $h$. 
\end{conj}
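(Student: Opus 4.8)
The plan is to follow the architecture of the proof of Theorem~\ref{main}, but to replace the Simon--Smith min-max theory over the space $\mathcal{E}$ of embedded two-spheres with a min-max theory carried out over a configuration space $\mathcal{I}$ of (suitably weak) branched immersed two-spheres in $S^3$. The pinching condition in Theorem~\ref{main} was needed precisely to keep solutions embedded, and the Torralbo examples show that for general prescription data one can only hope for branched immersions; so the natural substitute for the input from the Smale conjecture (namely $\mathcal{E}\simeq S^3$) is an understanding of the homotopy type of $\mathcal{I}$, or at least of the sub-family of three-parameter sweepouts of $S^3$ by branched immersed spheres. Genericity of $h$ will be used throughout: to guarantee that min-max critical points are smooth branched immersions with finite branch locus, that the Sard--Smale / White bumpy-type transversality needed for a Morse count is available, and, in the sign-changing case, to subsume the mild hypothesis on the zero set appearing in Theorem~\ref{main}.

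First I would construct a single solution. For generic $h$ one runs a one-parameter (mountain-pass) min-max for the $\mathcal{A}^h$ functional over $\mathcal{I}$. The existence theory of the second author and J.~Zhu \cite{zhou2020existence} already produces a closed prescribed mean curvature surface for generic $h$ without topological control; combining this with the genus-control arguments and branched-immersion regularity of D.~R.~Cheng and the second author \cite{cheng2023existence}, who treated the constant case $h\equiv c$, should upgrade the min-max limit to a genus-zero branched immersion $\Sigma_1$ with $H_{\Sigma_1}=h|_{\Sigma_1}$. The only genuinely new point at this stage is replacing the constant $c$ by a generic function, which enters the compactness analysis solely through the now non-constant right-hand side.

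Next I would produce a \emph{geometrically distinct} second solution. The space of unparametrized branched immersed two-spheres in $S^3$ should carry a nontrivial three-dimensional sweepout class, arising, as in the embedded case where $\mathcal{E}\simeq S^3$, from the decomposition of the complement into two components together with the topology of the ambient $S^3$. Sweeping out with this class defines a second min-max width $W_2\ge W_1$. If $W_2>W_1$ the associated critical point is automatically distinct from $\Sigma_1$; if $W_2=W_1$ then, after a generic perturbation of $h$, a Lusternik--Schnirelmann / cup-length argument (recall $\operatorname{cat}(S^3)=2$, exactly as one uses to force two critical points of a Morse function on $S^3$, and consistently with the Lyapunov--Schmidt picture when $h$ is near a constant) shows the critical set at that level contains at least two geometrically distinct branched immersions. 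A degree-theoretic alternative, in the spirit of \cite{rosenberg2020degree}, might also be available: one could try to show that the appropriate mod-$2$ or oriented count of branched immersed solutions exceeds the contribution of a single solution.

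\medskip

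\noindent\textbf{Main obstacle.} The crux is the weak regularity and topology control of the min-max limits once the pinching condition is dropped. Even in the constant-mean-curvature case, \cite{cheng2023existence} required substantial work to prevent the min-max varifold from degenerating into a higher-multiplicity minimal sphere or a cluster of pieces; controlling a second, higher-parameter limit, and in particular certifying that it is a genuine branched immersed sphere of genus zero rather than, say, a doubled minimal sphere, is where the real difficulty lies, and is why the statement remains a conjecture. A secondary obstacle is pinning down enough of the homotopy type of the configuration space $\mathcal{I}$ of branched immersed spheres to justify the existence of the three-parameter sweepout class; this is considerably more subtle than the Smale-conjecture input used for Theorem~\ref{main}.
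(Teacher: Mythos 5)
This statement is posed in the paper as a \emph{conjecture}, not a theorem, and the paper offers no proof of it; the authors themselves prove only Theorem~\ref{main}, which is the embedded, pinched version, and explicitly flag the immersed, unpinched statement as open. Your submission is not in fact a proof but a proposed programme, and you say as much in your final paragraph (``which is why the statement remains a conjecture''). So there is a genuine gap --- by your own honest admission --- and it is exactly where you say it is. Let me just sharpen what is missing and flag a couple of places where the analogy with the proof of Theorem~\ref{main} is shakier than your sketch suggests.

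First, the heart of Theorem~\ref{main} is the Smale-conjecture input $\mathcal{E}\simeq S^3$, which both supplies the non-trivial class in $H_3(\mathcal{E},\Z_2)$ and, via the Ketover--Liokumovich filigree retraction, yields the interpolation theorem (Theorem~\ref{interpolation}) that makes the Lusternik--Schnirelmann contradiction work. For a space $\mathcal{I}$ of branched immersed spheres there is no analogue of either ingredient. The homotopy type of immersed two-spheres in $S^3$ is governed by the Hirsch--Smale $h$-principle and is far from being a three-sphere, branched immersions are yet worse, and no semi-local contractibility statement in a weak topology is known there; without it, your proposed LS / cup-length argument cannot close. Second, the functional $\mathcal{A}^h(\Sigma,\Omega)=\area(\Sigma)-\int_\Omega h$ relies on an immersed sphere canonically bounding a region $\Omega$, which fails in general once the sphere has self-intersections, so even the definition of the min-max problem on $\mathcal{I}$ needs rethinking. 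Third, and most substantively, the Cheng--Zhou construction you cite in \cite{cheng2023existence} is a Sacks--Uhlenbeck / perturbed-energy (mapping-space) theory, not the Simon--Smith geometric-measure-theory machinery underlying \cite{sarnataro2023optimal} and \cite{wang2023existence}; the two frameworks have different compactness, different regularity, and different multiplicity pathologies, so ``following the architecture of Theorem~\ref{main}'' is not a drop-in replacement. Your identification of the real obstacle --- ruling out collapse of a higher-parameter min-max limit onto a multiplicity-two minimal sphere or a non-genus-zero configuration --- is accurate, and this is precisely the unsolved difficulty that keeps the statement a conjecture.
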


Finally, we mention that if one drops the topological control requirement, there are also similar conjectures \cite{mazurowski2024infinitely, zhou2022mean} on the existence of at least two solutions, in both the constant mean curvature and prescribed mean curvature case.  

\begin{conj}[Twin Bubble Conjecture]
Let $M^{n+1}$ be a closed manifold of dimension $3\le n+1\le 7$.  Then for every $c > 0$ there are at least two almost-embedded hypersurfaces in $M$ with constant mean curvature $c$. 
\end{conj}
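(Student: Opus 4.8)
We sketch a possible strategy toward this conjecture.

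A natural line of attack is through the min-max theory for the $c$-weighted area functional developed by the second author and Zhu \cite{zhou2020existence} and subsequently refined in \cite{zhou2020multiplicity,wang2023existence}. Working with the functional $\mathcal{A}^c(\Omega) = \operatorname{Per}(\Omega) - c\,\vol(\Omega)$ on the space of Caccioppoli sets in $M$, whose critical points are exactly the boundaries $\partial \Omega$ with constant mean curvature $c$, one defines a nondecreasing sequence of min-max widths $\omega_1^c \le \omega_2^c \le \cdots$ by a min-max of $\mathcal{A}^c$ over $p$-parameter sweepouts in the Almgren-Pitts sense; since the associated cycle space $\mathcal{Z}_n(M;\Z_2)$ is weakly homotopy equivalent to $\RP^\infty$, all powers $\bar\lambda^p$ of its degree-one generator are nonzero and the $\omega_p^c$ are defined for every $p$. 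Because the volume term is bounded by $c\,\vol(M)$, the Gromov-Guth-Liokumovich-Marques-Neves sublinear lower bounds for the volume spectrum give $\omega_p^c \to \infty$; and the Zhou-Zhu regularity theory, together with the multiplicity-one phenomenon forced by $c \ne 0$, should realize each $\omega_p^c$ as $\mathcal{A}^c(\Sigma_p)$ for a closed, almost-embedded hypersurface $\Sigma_p$ of constant mean curvature $c$.

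Granting this, the argument splits into two cases. If $\omega_1^c < \omega_2^c$, then $\Sigma_1$ and $\Sigma_2$ have distinct $\mathcal{A}^c$-values and hence are two distinct almost-embedded CMC-$c$ hypersurfaces, as required. If instead $\omega_1^c = \omega_2^c$, one invokes a Lusternik-Schnirelmann/deformation argument in the spirit of the Marques-Neves proof of the existence of infinitely many minimal hypersurfaces in positive Ricci curvature: the coincidence of two consecutive widths, combined with a pull-tight deformation and a compactness theorem for the relevant critical set, should force the set of min-max CMC-$c$ hypersurfaces at the common level to have positive Lusternik-Schnirelmann category, hence to be infinite --- yielding far more than two solutions. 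Either way the conjecture would follow, and one indeed expects infinitely many solutions whenever the widths eventually coincide; compare \cite{mazurowski2024infinitely,zhou2022mean}.

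The principal obstacles lie in making both cases rigorous in the CMC category. First, propagating the Zhou-Zhu existence and multiplicity-one theory to the higher-parameter widths $\omega_p^c$ with $p \ge 2$ is delicate: one must rule out that the min-max limit picks up a minimal (mean curvature $0$) component or degenerates onto lower-multiplicity pieces, and this was already the heart of the analysis when $p = 1$. Second, the Lusternik-Schnirelmann mechanism in the equality case requires a Sharp-type compactness theorem for almost-embedded CMC-$c$ hypersurfaces of bounded area and index, together with a deformation theory adapted to the fact that $\mathcal{A}^c$ is not even --- so one cannot descend to the quotient $\RP^\infty$ as in the minimal case --- and neither ingredient is presently available in the generality required. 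A further structural difficulty is that two CMC-$c$ hypersurfaces with the same value of $c$ need not intersect, so there is no Frankel property to exploit, and the equality case cannot be handled simply by importing Song's cylinder-and-boundary argument from the minimal setting. For these reasons it seems sensible to attack the conjecture first in special ambient manifolds --- for instance metrics of positive Ricci curvature, or the round $S^3$ as in Theorem \ref{main} --- where the space of Caccioppoli sets can be replaced by a space of topologically controlled hypersurfaces whose algebraic topology directly supplies two critical values.
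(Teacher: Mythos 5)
This statement is labeled a \emph{conjecture} in the paper, not a theorem: the paper offers no proof, explicitly describes it as open, and only notes partial results for very large $c$ (Ye, Pacard--Xu) and very small $c$ (Dey), plus the pinched-PMC result on the round $S^3$ proved in this paper. So there is no paper proof to compare against. Your proposal is, as you say, a strategy sketch, and it is a reasonable outline of the generally expected line of attack: Zhou--Zhu min-max for $\mathcal{A}^c$, a dichotomy $\omega_1^c < \omega_2^c$ versus $\omega_1^c = \omega_2^c$, and a Lusternik--Schnirelmann argument in the equality case. You also correctly identify the two principal obstructions --- regularity and multiplicity control at the higher widths, and the absence of Sharp-type compactness and of a Frankel property in the CMC category --- so this cannot be mistaken for a completed proof.

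One gap you understate occurs already in your second paragraph, before your dichotomy is even well posed. The domain of $\mathcal{A}^c$ is the space of Caccioppoli sets $\mathcal{C}(M)$, which is contractible; the cycle space $\mathcal{Z}_n(M;\Z_2)\simeq\RP^\infty$ is its two-to-one quotient under $\partial$, and because $\mathcal{A}^c(\Omega)\neq\mathcal{A}^c(M\setminus\Omega)$ for $c\neq 0$ the functional does not descend to that quotient. You acknowledge this when discussing Lusternik--Schnirelmann, but it already bites when \emph{defining} $\omega_p^c$ for $p\ge 2$: one cannot simply pull back the classes $\bar\lambda^p$ as in the Almgren--Pitts volume spectrum, and making higher-parameter CMC widths nontrivial requires a different device (relative sweepouts, a half-volume-type spectrum, or a topologically rich replacement for $\mathcal{C}(M)$). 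Until that is settled, the case split on $\omega_1^c$ versus $\omega_2^c$ has no rigorous meaning. Your concluding suggestion to work first in restricted settings where $\mathcal{C}(M)$ can be replaced by a topologically controlled space of hypersurfaces is exactly what the present paper does on the round three-sphere, using the Smale conjecture to get the nontrivial $H_3(\mathcal{E},\Z_2)$ class, and is the right direction.
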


\begin{conj}
Let $M^{n+1}$ be a closed manifold of dimension $3\le n+1\le 7$.  For a generic function $h\colon M\to \R$, there at least two almost-embedded hypersurfaces in $M$ with mean curvature prescribed by $h$. 
\end{conj}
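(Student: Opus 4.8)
\subsection*{A proof proposal}

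The plan is to combine the prescribed mean curvature min-max theory of \cite{zhou2020existence} (refined in \cite{sarnataro2023optimal}, \cite{wang2023existence}) with a Lusternik--Schnirelmann argument of the type developed for minimal hypersurfaces by Marques--Neves. Fix a generic $h$ and consider the functional $\mathcal A^h(\Omega)=\mathcal H^n(\partial^*\Omega)-\int_\Omega h$ on the space $\mathcal C(M)$ of Caccioppoli sets (equivalently, modulo boundary, on the space $\mathcal Z_{n,n+1}(M;\mathbb Z_2)$ of relative $\mathbb Z_2$-cycles, whose $\mathbb Z_2$-cohomology ring is $\mathbb Z_2[\bar\lambda]$ with $\bar\lambda$ in degree one). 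For each $p\ge 1$ define the $p$-th PMC width
\[
\omega_p^h=\inf_{\Phi}\ \sup_{x\in \mathrm{dmn}(\Phi)}\ \mathcal A^h(\Phi(x)),
\]
the infimum over $p$-sweepouts $\Phi$, i.e.\ maps detecting $\bar\lambda^p$. These satisfy $\omega_1^h\le\omega_2^h\le\cdots$, and since $\mathcal A^h\ge \mathcal H^n(\partial^*\,\cdot\,)-\|h\|_{\infty}\vol(M)$, comparison with the Almgren--Pitts widths of the area functional gives $\omega_p^h\to\infty$; in particular the sequence is not eventually constant.

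First I would upgrade the one-parameter existence and regularity theory of \cite{zhou2020existence} to the multi-parameter setting: each $\omega_p^h$ is achieved by an almost-embedded hypersurface $\Sigma_p=\partial^*\Omega_p$ with $H_{\Sigma_p}=h|_{\Sigma_p}$, appearing with multiplicity one along its two-sided part. The multiplicity-one conclusion is where $h\not\equiv 0$ enters: the Caccioppoli-set formulation forces the PMC part of the min-max limit to be the reduced boundary of a region, hence of multiplicity one, and the genericity of $h$ (the mild condition on $\{h=0\}$ from the abstract) rules out a leftover closed minimal component. This step is technical but follows the template of \cite{zhou2020existence}, \cite{wang2023existence} now run over families parametrized by $p$-dimensional complexes; the pull-tight and combinatorial deformation arguments carry over because $\mathcal A^h$ differs from area by the smooth, bounded term $-\int_\Omega h$.

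Next I would run the Lusternik--Schnirelmann dichotomy. If $\omega_1^h<\omega_2^h$, then $\Sigma_1$ and $\Sigma_2$ realize distinct values of $\mathcal A^h$, hence are distinct almost-embedded PMC hypersurfaces, and we are done. If instead $\omega_1^h=\omega_2^h=:\omega$, one argues, exactly as in the minimal case, that the set of min-max PMC hypersurfaces with $\mathcal A^h=\omega$ cannot be deformed into a set of ``$\mathbb Z_2$-category one'': a single nondegenerate critical point at level $\omega$ would let one push a $2$-sweepout strictly below $\omega$, contradicting $\omega_2^h=\omega$. Combined with a bumpy-type theorem for PMC hypersurfaces---for generic $h$ every solution with $\mathcal A^h\le\omega$ is nondegenerate, so there are only finitely many of them---this again forces at least two geometrically distinct solutions at level $\omega$. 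Either way one obtains the desired pair (and, pushing the same argument through all $p$, in fact infinitely many solutions, of which the conjecture's ``at least two'' is a special case).

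The hard part will be two intertwined points. First, the bumpy-$h$ theorem: one must show that for generic $h$ the linearized prescribed-mean-curvature operator (the Jacobi operator together with the zeroth-order term $\nabla_\nu h$) has trivial kernel on every solution---a White-type transversality statement for the parametric equation, proved by Sard--Smale applied to $(\iota,h)\mapsto H_{\iota}-h\circ\iota$---and that this yields local finiteness of the solution set with bounded $\mathcal A^h$; the required compactness is delicate precisely on the nodal set $\{h=0\}$, where solutions degenerate toward minimal ones and one needs curvature estimates of the type in \cite{sarnataro2023optimal}, or an a priori pinching, to preserve embeddedness and prevent concentration. Second, the Lusternik--Schnirelmann deformation theory must be implemented for the \emph{non-even} functional $\mathcal A^h$ on $\mathcal C(M)$: the usual constructions are written for the area functional on the space of cycles with its $\mathbb Z_2$-action, and although $\mathcal A^h$ is only a tame perturbation, verifying that the interpolation and ``deform-to-decrease'' constructions descend to this constrained setting---uniformly near the boundary of the parameter complex---is where the genuine work lies. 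Carrying out these two steps, and verifying the multi-parameter regularity of the previous paragraph in the presence of the pinching/nodal-set subtleties, is what makes the statement a conjecture rather than a theorem.
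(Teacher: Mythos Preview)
This statement is recorded in the paper as an open \emph{conjecture}; the paper offers no proof of it. The paper's main theorem is a very special instance (two embedded $2$-spheres in the round $S^3$ under the pinching $|h|<h_0$), and its argument relies on topology unavailable in a general $M^{n+1}$: the Smale conjecture gives $\mathcal E\simeq S^3$, and the second solution is detected by the nontrivial class in $H_3(\mathcal E;\mathbb Z_2)$ via a four-parameter sweepout compared against a family of one-parameter sweepouts. Your proposal is therefore not competing with a proof in the paper but attacking an open problem, and you rightly conclude that the obstacles you list ``make the statement a conjecture rather than a theorem.''

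That said, there is a gap at the very first step that is more serious than your closing paragraph suggests. You write that $\mathcal A^h$ lives ``equivalently, modulo boundary, on the space $\mathcal Z_{n,n+1}(M;\mathbb Z_2)$ \ldots\ whose $\mathbb Z_2$-cohomology ring is $\mathbb Z_2[\bar\lambda]$,'' but this is false: $\mathcal A^h(\Omega)\neq\mathcal A^h(M\setminus\Omega)$ in general, so $\mathcal A^h$ does not descend from $\mathcal C(M)$ to the space of mod-$2$ cycles. The natural domain $\mathcal C(M)$ is the double cover of $\mathcal Z_n(M;\mathbb Z_2)\simeq K(\mathbb Z_2,1)$ and is weakly contractible, so there is no class $\bar\lambda$ to cup with and hence no evident definition of $\omega_p^h$ for $p\ge 2$. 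This is not merely a matter of adapting deformation lemmas to a ``non-even'' functional; it obstructs the very formulation of the multi-parameter min-max values in your scheme, and with it the inequality $\omega_1^h\le\omega_2^h$ and the Lusternik--Schnirelmann dichotomy you invoke. Finding a substitute source of topology---as the paper does in the $S^3$ case via the Smale conjecture, or as one might attempt via relative, half-volume, or suitably lifted constructions---is the essential missing idea, and is exactly why the general statement remains conjectural.
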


The Twin Bubble Conjecture is known to be true when $c$ is very large by the work of Ye \cite{ye1991foliation} and Pacard-Xu \cite{pacard2009constant}.  It is also known when $c$ is very small. In fact, Dey \cite{dey2023existence} proved that the number of CMCs with mean curvature $c$ goes to infinity as $c\to 0$; also see \cite{mazurowski2022cmc} for an explicit construction of some CMCs with small mean curvature.

\subsection{Sketch of Proof} In this section, we sketch the proof of Theorem \ref{main}.  Fix a smooth function $h$ on $S^3$ which is admissible for the PMC min-max theory and satisfies $\vert h\vert<h_0$. By the work of Sarnataro-Stryker \cite{sarnataro2023optimal}, there exists an oriented, embedded two-sphere $\Gamma$ in $S^3$ with mean curvature prescribed by $h$.  The proof of Theorem \ref{main} is by contradiction.  Suppose that $\Gamma$ is the only oriented, embedded two-sphere with mean curvature prescribed by $h$.

 Let $\mathcal E$ be the space of oriented, embedded two-spheres in $S^3$.  By the Smale conjecture, $\mathcal E$ is homotopy equivalent to $S^3$ and the set of all oriented, equatorial two-spheres represents the non-trivial homology class in $H_3(\mathcal E,\Z_2)$.  For any $\Sigma\in \mathcal E$, the orientation determines a unique region $\Omega$ enclosed by $\Sigma$ and we define 
 \[
 \mathcal A^h(\Sigma) = \area(\Sigma) - \int_\Omega h. 
 \]
Next, we apply min-max theory for the $\mathcal A^h$ functional to several different homotopy classes of maps into $\mathcal E$.

First, for each $p\in S^3$, let $\Pi_p$ be the set of sweepouts $\Psi\colon [0,\pi]\to \mathcal E$ that originate at the point $p$ and terminate at the antipodal point $-p$.  We apply Simon-Smith min-max theory for the $\mathcal A^h$ functional to the homotopy class $\Pi_p$.  As in \cite{sarnataro2023optimal}, the pinching condition $\vert h\vert < h_0$ ensures that min-max detects a smooth, embedded two-sphere with mean curvature $h$, which is therefore necessarily given by $\Gamma$.  

Next, we consider a four parameter min-max problem.  Let $X = S^3\times [0,\pi]$ and let $Z = S^3 \times \{0,\pi\}$. Define a map $\Phi \colon X\to \mathcal E$ by setting 
\[
\Phi(p,r) = \bd B(p,r),
\]
where this sphere is oriented by the normal vector pointing toward $p$.    Let $\Pi$ be the set of all maps $\Psi\colon X\to \mathcal E$ which are homotopic to $\Phi$ via a homotopy fixing the values on $Z$.  

We apply the Simon-Smith min-max theory for the $\mathcal A^h$ functional to the homotopy class $\Pi$. Again, by the pinching condition $\vert h\vert < h_0$, this produces a smooth, embedded two-sphere with mean curvature $h$, which must be $\Gamma$.  Assume for simplicity that there is an optimal map $\Psi\colon X\to \mathcal E$ belonging to $\Pi$ with 
\begin{equation}
\label{optimal} 
\sup_{x\in X} \mathcal A^h(\Psi(x)) = \mathcal A^h(\Gamma). 
\end{equation}
Note then that for each fixed $p$, the path $r\mapsto \Psi(p,r)$ is an optimal map belonging to $\Pi_p$.  Therefore, for each $p$ there must be some $r(p)$ such that $\Psi(p,r(p)) = \Gamma$.  Again, assume for simplicity that for each $p\in S^3$ this point $r(p)$ is unique.  Then $Y = \{(p,r(p)): p\in S^3\} \subset X$ is a three-sphere homologous to $S^3 \times \{\frac{\pi}{2}\}$. 

Since $\Psi$ is homotopic to $\Phi$, it follows that $\Psi_*[Y] = \Phi_*[S^3 \times \{\frac {\pi}{2}\}]$ in $H_3(\mathcal E,\Z_2)$. Now observe that $\Psi(p,r) = \Gamma$ for all $(p,r)\in Y$ and so $\Psi_*[Y] = 0$.  On the other hand, $\Phi_*[S^3\times \{\frac{\pi}{2}\}]$ is the family of oriented, equatorial two-spheres, which is non-trivial in $H_3(\mathcal E,\Z_2)$.  This is a contradiction, and the theorem follows. 

There are two technical issues that need to be addressed to turn the above sketch into a rigorous argument.  The first is that the point $r(p)$ need not be unique.  We can handle this by using a Lusternik-Schnirelmann argument (c.f.\ the Marques-Neves \cite{marques2014min} proof of the Willmore conjecture) to produce a suitable three-dimensional complex $Y$ in $X$ which is homologous to the fiber.  The second issue is that there need not be an optimal map achieving equality in \eqref{optimal}.  Because of this, the map $\Psi$ is not literally constant when restricted to $Y$.  Rather, $\Psi$ maps $Y$ into a small neighborhood of $\Gamma$ with respect to a certain weak topology. To handle this, we prove an interpolation theorem which essentially says that the space $\mathcal E$ is semi-locally contractible with respect to this weak topology.  The interpolation theorem follows from combining the Smale conjecture with a filigree retraction lemma of Ketover-Liokumovich \cite{ketover2025smale}.

\subsection{Organization} The remainder of the paper is organized as follows.  In Section \ref{preliminary}, we discuss the tools needed to prove our main theorem, including the Smale conjecture, the PMC Simon-Smith min-max theory, and an important interpolation theorem.  Then in Section \ref{thm-proof}, we give the proof of our main result. 

\subsection{Acknowledgment} X.Z. acknowledges the support by NSF grant DMS-2506717 and and a grant from the Simons Foundation. 

\section{Preliminaries} 
\label{preliminary}

In this section, we review the main tools that will be used in the proof.  First,  we discuss the space of embedded two-spheres in $S^3$.  Then we summarize some definitions from geometric measure theory.  Next we recall the Simon-Smith min-max theory for the prescribed mean curvature functional. Finally, we prove an interpolation theorem that follows by combining the Smale conjecture with a filigree retraction argument of Ketover-Liokumovich \cite{ketover2025smale}.

\subsection{The Space of Embedded Two-Spheres} Let $S^3$ be the unit three-sphere.  We denote by $\mathcal E$ the space of all oriented, smoothly embedded two-spheres in $S^3$, equipped with the smooth topology.  
We will use the following well-known consequence of the Smale conjecture \cite{hatcher1983proof}:

\begin{theorem}
\label{embedding-structure}
The space $\mathcal E$ of all oriented, smoothly embedded two-spheres deformation retracts to the space of all oriented equatorial two-spheres. In particular, $\mathcal E$ is homotopy equivalent to $S^3$.  
\end{theorem}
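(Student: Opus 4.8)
The plan is to exhibit $\mathcal E$ as a homogeneous space of the diffeomorphism group $\operatorname{Diff}(S^3)$ and to push the Smale conjecture through the resulting fibre bundle. View $S^3\subset\R^4$, fix the oriented equatorial two-sphere $E_0=\{x_4=0\}\cap S^3$ co-oriented by the normal pointing into $\Omega_+=\{x_4>0\}$, and write $\Omega_-=\{x_4<0\}$. The first step is to observe that $\operatorname{Diff}(S^3)$ acts transitively on $\mathcal E$: by the smooth Schoenflies theorem in dimension three every smoothly embedded two-sphere bounds a smooth ball on each side, whence (using uniqueness of collars and isotopy extension) any two smoothly embedded two-spheres are ambiently isotopic, and the diffeomorphism $(x_1,x_2,x_3,x_4)\mapsto(-x_1,x_2,x_3,-x_4)$ fixes $E_0$ setwise while exchanging $\Omega_+$ and $\Omega_-$; combining these one carries any oriented embedded two-sphere onto $(E_0,\Omega_+)$. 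The orbit map $\operatorname{Diff}(S^3)\to\mathcal E$, $\phi\mapsto\phi(E_0,\Omega_+)$, is then a locally trivial fibre bundle, the local sections being manufactured from the tubular neighbourhood theorem (nearby two-spheres are normal graphs, and one builds ambient diffeomorphisms depending smoothly on the graph), so we obtain a fibration
\[
G_0\longrightarrow\operatorname{Diff}(S^3)\longrightarrow\mathcal E ,
\]
in which $G_0$ is the stabiliser of $(E_0,\Omega_+)$. The corresponding $\mathrm O(4)$-orbit map is the fibration $\mathrm O(3)\to\mathrm O(4)\to S^3$, where $S^3$ is identified with the space of oriented equatorial two-spheres (the sphere $v^\perp\cap S^3$ co-oriented toward a unit vector $v$), and since an isometry fixing $E_0$ and $\Omega_+$ must fix $e_4$ we have $\mathrm O(4)\cap G_0=\mathrm O(3)$. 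These two fibrations fit into a commutative ladder whose base map is the inclusion $S^3\hookrightarrow\mathcal E$ of the equatorial two-spheres.

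By the Smale conjecture \cite{hatcher1983proof} the inclusion $\mathrm O(4)\hookrightarrow\operatorname{Diff}(S^3)$ is a homotopy equivalence. Comparing the long exact sequences of homotopy groups of the two fibrations and invoking the five lemma, it therefore suffices to prove that $\mathrm O(3)\hookrightarrow G_0$ is a homotopy equivalence: then $S^3\hookrightarrow\mathcal E$ is a weak homotopy equivalence, hence a genuine homotopy equivalence because both spaces have the homotopy type of CW complexes, and since this inclusion is a closed cofibration it is in fact a strong deformation retraction, giving the statement of the theorem.

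To compute $G_0$ I would restrict diffeomorphisms to the equator. This defines a map $G_0\to\operatorname{Diff}(S^2)$ which is surjective --- any diffeomorphism of $S^2$ extends over a bicollar of $E_0$ and then over the two complementary balls --- and which is a fibration by the parametrised isotopy extension theorem. Its fibre $K$ consists of the diffeomorphisms of $S^3$ that fix $E_0$ pointwise and preserve each of $\Omega_+,\Omega_-$. The subgroup $K'\subset K$ of diffeomorphisms equal to the identity near $E_0$ is homeomorphic to $\operatorname{Diff}_\partial(\overline{\Omega_+})\times\operatorname{Diff}_\partial(\overline{\Omega_-})$, two copies of the group $\operatorname{Diff}_\partial(B^3)$ of diffeomorphisms of the $3$-ball that are the identity near its boundary; a standard collar argument shows $K'\hookrightarrow K$ is a homotopy equivalence, and $\operatorname{Diff}_\partial(B^3)$ is contractible (a reformulation of the Smale conjecture), so $K$ is contractible. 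Hence $G_0\to\operatorname{Diff}(S^2)$ is a homotopy equivalence; since the composite $\mathrm O(3)\hookrightarrow G_0\to\operatorname{Diff}(S^2)$ is the standard inclusion, which is a homotopy equivalence by Smale's theorem $\operatorname{Diff}(S^2)\simeq\mathrm O(3)$, the two-out-of-three property yields that $\mathrm O(3)\hookrightarrow G_0$ is a homotopy equivalence, completing the proof.

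The one deep input is the Smale conjecture itself, which we may cite; the rest is bookkeeping about diffeomorphism groups as infinite-dimensional manifolds. Accordingly the main obstacle --- and the place where care is genuinely required --- lies in that infinite-dimensional differential topology: checking that the orbit maps are honest fibre bundles and not merely surjections (local slices from tubular neighbourhoods), that restriction to a submanifold is a Serre fibration (parametrised isotopy extension), and, most delicately, justifying the collar and smoothing manoeuvres that split $K$ as a product over the two closed hemispheres, since gluing two diffeomorphisms of $B^3$ that merely agree on $\partial B^3$ need not be smooth across $E_0$. Each of these is standard, but they demand attention to the relevant function-space topologies.
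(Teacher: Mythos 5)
The paper does not prove Theorem~\ref{embedding-structure}: it is invoked as a ``well-known consequence of the Smale conjecture'' with a citation to Hatcher, so there is no proof in the text to compare against. Your argument supplies a correct derivation along the standard lines that underlie that citation: realize $\mathcal E$ as a homogeneous space via the orbit map $\operatorname{Diff}(S^3)\to\mathcal E$ (local triviality coming from tubular-neighbourhood slices), compare with the linear fibration $\mathrm O(3)\to\mathrm O(4)\to S^3$ using $\operatorname{Diff}(S^3)\simeq\mathrm O(4)$, and then reduce the stabilizer $G_0$ to $\mathrm O(3)$ by the restriction fibration $K\to G_0\to\operatorname{Diff}(S^2)$ together with the contractibility of $\operatorname{Diff}_\partial(B^3)$ (an equivalent form of the Smale conjecture) and Smale's theorem $\operatorname{Diff}(S^2)\simeq\mathrm O(3)$. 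The technical points you flag at the end --- local triviality of orbit maps, parametrized isotopy extension for restriction, and the collar/germ argument showing $K'\hookrightarrow K$ is an equivalence despite uncontrolled normal derivatives along $E_0$ --- are exactly the places that require care, but each is standard infinite-dimensional differential topology; note also that the final step (weak equivalence $\Rightarrow$ deformation retraction) uses that $\mathcal E$ is an ANR with $S^3$ a closed ANR subspace, which holds since $\mathcal E$ is a Fr\'echet manifold and the equators form a compact smooth finite-dimensional submanifold. In short, the proposal is a correct and essentially canonical proof of a statement the paper takes as given.
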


This has the following immediate corollary.

\begin{corollary}
\label{homology} 
The set of all oriented, equatorial two-spheres represents the non-trivial homology class in $H_3(\mathcal E,\Z_2)$. 
\end{corollary}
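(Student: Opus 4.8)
The plan is to deduce this immediately from Theorem \ref{embedding-structure}. Write $\mathcal Q \subset \mathcal E$ for the subspace consisting of the oriented equatorial two-spheres. The first step is to observe that $\mathcal Q$ is a closed, connected, orientable smooth three-manifold. Indeed, an equatorial two-sphere is precisely the intersection of $S^3 \subset \R^4$ with a hyperplane through the origin, and since such a sphere is totally geodesic its unit normal is the constant field $\pm v$, where $v$ spans the line orthogonal to the hyperplane; a choice of orientation on the sphere (together with the ambient orientation) is thus the same as a choice of one of the two vectors $\pm v$. Hence the map sending an oriented equatorial two-sphere to its unit normal vector is a homeomorphism (a diffeomorphism, for the smooth topology on $\mathcal E$) from $\mathcal Q$ onto $S^3$. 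In particular $H_3(\mathcal Q,\Z_2)\cong \Z_2$, and $\mathcal Q$ carries a mod-two fundamental class $[\mathcal Q]$ generating this group.

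Next I would invoke the content of Theorem \ref{embedding-structure}: $\mathcal E$ deformation retracts onto $\mathcal Q$. Consequently the inclusion $\iota\colon \mathcal Q\hookrightarrow \mathcal E$ is a homotopy equivalence, so $\iota_*\colon H_3(\mathcal Q,\Z_2)\to H_3(\mathcal E,\Z_2)$ is an isomorphism. Combining this with the computation of the previous paragraph, $H_3(\mathcal E,\Z_2)\cong \Z_2$ and $\iota_*[\mathcal Q]$ is its unique non-zero element. Since $\iota_*[\mathcal Q]$ is by definition the class in $H_3(\mathcal E,\Z_2)$ represented by the family of oriented equatorial two-spheres, this is exactly the assertion of the corollary.

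I do not expect any genuine obstacle: the corollary is a formal consequence of the way the Smale conjecture has been packaged in Theorem \ref{embedding-structure}. The only points deserving a word of care are that the deformation retraction is carried out within $\mathcal E$ (so that $\iota$ is an honest homotopy equivalence, not merely a map inducing a surjection on $H_3$), and that $\mathcal Q$, being a closed three-manifold, possesses a well-defined $\Z_2$-fundamental class, which is what ``the homology class represented by the equatorial two-spheres'' refers to. Both are immediate from the first paragraph.
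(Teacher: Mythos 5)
Your proposal is correct and is precisely the argument the paper leaves implicit when it calls this an ``immediate corollary'' of Theorem \ref{embedding-structure}: identify the oriented equatorial spheres with $S^3$ via the unit normal, then use the deformation retraction to see that the inclusion is a homotopy equivalence and hence carries the mod-two fundamental class of that $S^3$ to the generator of $H_3(\mathcal E,\Z_2)$. Nothing is missing.
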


Given an element $\Sigma \in \mathcal E$, there is a unique open set $\Omega \subset S^3$ such that $\Sigma = \bd \Omega$ and the normal vector to $\Sigma$ determined by the orientation points into $\Omega$.  Because the functional we use depends on $\Omega$, we will frequently use the notation $(\Sigma,\Omega)$ to refer to an element of $\mathcal E$. 

\begin{defn}
Given a smooth function $h\colon S^3\to \R$, the $\mathcal A^h$ functional is defined on $\mathcal E$ by 
\[
\mathcal A^h(\Sigma,\Omega) = \area(\Sigma) - \int_\Omega h. 
\]
Smooth critical points of the $\mathcal A^h$ functional have prescribed mean curvature $h$. 
\end{defn}

The following map $\Phi\colon S^3\times (0,\pi)\to \mathcal E$ will play an important role in the proof.   For each fixed $p$, the map $r\mapsto \Phi(p,r)$ is a sweepout of $S^3$ by two-spheres which originate near $p$ and terminate near the antipodal point $-p$.  The two-spheres are oriented by the normal vector pointing to the side containing $p$. 

\begin{defn}Define $\Phi\colon S^3\times (0,\pi) \to \mathcal E$ by setting
$
\Phi(p,r) = (\bd B_p(r),B_p(r)).  
$
\end{defn}

For technical reasons, we will also work with the space of (parameterized) smooth embeddings $\operatorname{Emb}(S^2,S^3)$.  There is a projection map $\pi\colon \operatorname{Emb}(S^2,S^3)\to \mathcal E$ associating to each embedding $\phi$ its image $\phi(S^2)$.  It will be convenient for us later to know that the above map $\Phi$ admits a lift to the space of embeddings.

\begin{prop}
\label{lift}
There exists a lift $\tilde \Phi \colon S^3\times (0,\pi)\to \operatorname{Emb}(S^2,S^3)$ such that $\pi\circ \tilde \Phi = \Phi$.
\end{prop}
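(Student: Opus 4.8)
The plan is to construct the lift explicitly using the round geometry of $S^3$, rather than appealing to an abstract lifting criterion. The point is that for each fixed center $p \in S^3$ and radius $r \in (0,\pi)$, the geodesic sphere $\partial B_p(r)$ is a genuinely round two-sphere, so it carries a canonical parametrization once we fix a little bit of auxiliary data. Concretely, $S^3 \setminus \{p, -p\}$ is foliated by the geodesic spheres $\partial B_p(r)$, and along each geodesic emanating from $p$ the exponential map $\exp_p$ identifies the unit sphere $U_p S^3 \subset T_p S^3$ with $\partial B_p(r)$ via $v \mapsto \exp_p(rv)$. So the naive recipe is $\tilde{\Phi}(p,r)(x) = \exp_p(r\, \iota_p(x))$, where $\iota_p \colon S^2 \to U_p S^3$ is some chosen isometric identification of the model $S^2$ with the unit sphere in $T_p S^3$.

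The one subtlety is that there is no global continuous choice of $\iota_p$ as $p$ ranges over $S^3$: choosing $\iota_p$ amounts to a section of the bundle of isometries $S^2 \to U_p S^3$, which is (a bundle with fiber $O(3)$ or $SO(3)$) and obstructs a global section for the same reason that $S^3$ has no global orthonormal frame obstruction — actually $TS^3$ is trivial, so this is fine. Here is the cleaner way I would actually run it: since $S^3$ is parallelizable (for instance using its Lie group structure as the unit quaternions, left translation gives a global trivialization $TS^3 \cong S^3 \times \R^3$), fix once and for all a global smooth frame, i.e.\ a bundle isomorphism $\tau \colon S^3 \times \R^3 \to TS^3$ which is a linear isometry on each fiber. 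Then set $\iota_p = \tau_p|_{S^2} \colon S^2 \to U_p S^3$, where we view the model $S^2$ as the unit sphere in $\R^3$. Define
\[
\tilde{\Phi}(p,r)(x) = \exp_p\bigl(r\,\tau_p(x)\bigr), \qquad x \in S^2 \subset \R^3.
\]
First I would check that for each $(p,r)$ this is a smooth embedding $S^2 \hookrightarrow S^3$ with image exactly $\partial B_p(r)$: this is immediate from the fact that $\exp_p$ restricted to the sphere of radius $r < \pi$ in $T_p S^3$ is a smooth embedding onto the geodesic sphere (injectivity radius of the round $S^3$ is $\pi$). Then I would check joint smoothness in $(p,r,x)$, which follows since the exponential map of a smooth Riemannian manifold is smooth in all variables and $\tau$ is smooth. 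Finally, $\pi \circ \tilde{\Phi}(p,r) = \partial B_p(r) = \Phi(p,r)$ by construction, noting the orientation convention matches: the outward normal of $\exp_p(r\,\cdot\,)$ at radius $r$ points away from $p$, so the inward normal points toward $p$, consistent with the definition $\Phi(p,r) = (\partial B_p(r), B_p(r))$.

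The main obstacle — really the only thing to be careful about — is the orientation/parametrization bookkeeping: one must confirm that $\tilde{\Phi}(p,r)$, as an element of $\operatorname{Emb}(S^2,S^3)$, induces (via the chosen orientation of the model $S^2$ pushed forward) the orientation on $\partial B_p(r)$ whose normal points into $B_p(r)$, so that $\pi \circ \tilde{\Phi} = \Phi$ holds as a map into the space $\mathcal{E}$ of \emph{oriented} spheres and not merely of unoriented ones. If the global frame $\tau$ happens to be orientation-reversing fiberwise one can fix this by precomposing with a fixed reflection of $S^2$, or equivalently by choosing $\tau$ compatible with a fixed orientation of $S^3$; since $S^3$ is orientable and $TS^3$ trivial this is always arrangeable. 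With that settled, the proposition is proved. (One could alternatively invoke the homotopy lifting property of the fibration $\pi \colon \operatorname{Emb}(S^2,S^3) \to \mathcal{E}$ with structure group $\operatorname{Diff}(S^2)$ together with contractibility of $S^3 \times (0,\pi)$ onto, say, $\{(p_0, \pi/2)\}$ and a single chosen lift there, but the explicit construction above is more transparent and gives a lift with good geometric control that may be reused.)
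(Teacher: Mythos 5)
Your main construction is correct and is, at heart, the same idea the paper uses: both proofs exploit the parallelizability of $S^3$ coming from the unit quaternion Lie group structure, which supplies, simultaneously for all $p$, a coherent identification of the model $S^2$ with an oriented unit sphere anchored at $p$. The paper's version sets $\tilde\Phi(p,\tfrac{\pi}{2},q)=pq$ (quaternion multiplication) to parametrize the equatorial sphere and then pushes off by normal geodesic flow, $\tilde\Phi(p,\tfrac{\pi}{2}+t,\xi)=\exp_{p\xi}(-t\nu)$, while you instead pick a global isometric frame $\tau_p$ (via left translation) and parametrize $\partial B_p(r)$ directly as $\exp_p(r\,\tau_p(x))$. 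These are minor variants of one another; yours is marginally more streamlined since it avoids the two-stage ``parametrize the equator, then flow'' structure, at the cost of a slightly less explicit orientation check. Your handling of the orientation issue (observe that by connectedness the sign is constant, and precompose with a fixed reflection of $S^2$ if needed) is a legitimate substitute for the paper's more computational verification that $\{pi,pj,p,pk\}$ is a positive frame.

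One genuine error, confined to your closing parenthetical: $S^3\times(0,\pi)$ is \emph{not} contractible onto $\{(p_0,\pi/2)\}$ — it deformation retracts to $S^3\times\{\pi/2\}\cong S^3$, which is not contractible — so the alternative argument you sketch (``homotopy lifting plus contractibility of the base plus a single lift at a point'') does not go through as stated. A lift does in fact exist by obstruction theory using $\operatorname{Diff}^+(S^2)\simeq SO(3)$ and the cohomology of $S^3$, but that requires a different argument than the one you gesture at. Since this is an aside and your main construction stands on its own, the proposition is still proved, but you should either delete the parenthetical or replace ``contractibility'' with the correct obstruction-theoretic statement.
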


\begin{proof}
It suffices to construct a smooth map
\[
\tilde \Phi \colon S^3\times (0,\pi)\times S^2\to S^3
\]
with the property that $\tilde \Phi(p,r,S^2) = \Phi(p,r)$ for all $(p,r)\in S^3\times (0,\pi)$.  This can be neatly accomplished using quaternions.  Think of $\R^4$ as the space of quaternions, $S^3$ as the space of unit quaternions, and $S^2$ as the space of unit imaginary quaternions. 

We first define 
\[
\tilde \Phi\left(p,\frac{\pi}{2},q \right) = pq,
\]
where on the product on the right hand side denotes  quaternion multiplication. Observe that 
\[
\la pq,p\ra = \operatorname{Re}(pq\bar p) = \operatorname{Re}(q) = 0.
\]
Here we have used the following two facts.  First, for two quaternions $a$ and $b$, the inner product in $\R^4$ can be represented by 
\[
\la a,b\ra = \operatorname{Re}(a \bar b).
\]
Second, if $a$ is a unit quaternion then
\[
\operatorname{Re}(ab\bar a) = \frac 1 2 \left[ab\bar a + \overline{(ab\bar a)} \right] = \frac{1}{2}a\left[b + \bar b\right] \bar a = \operatorname{Re}(b) a\bar a=  \operatorname{Re}(b).
\]
It follows that $\tilde \Phi(p,\frac{\pi}{2},\cdot)$ parameterizes the great sphere obtained by intersecting $S^3$ with the plane orthogonal to $p$.  

Orient $\R^4$ so that $\{1,i,j,k\}$ is a positive frame.  Orient $S^3$ by the outward normal vector, so that a frame $\{v_1,v_2,v_3\}$ at $p$ is positively oriented if and only if $\{v_1,v_2,v_3,p\}$ is a positively oriented frame in $\R^4$.  Likewise, orient $S^2 \subset S^3$ by the normal vector $1$, so that a frame $\{v_1,v_2\}$ at $q$ is positively oriented if and only if $\{v_1,v_2,1\}$ is a positively oriented frame in $S^3$. Fix a point $p\in S^3$ and let 
\[
\Sigma_p = \tilde \Phi\left(p,\frac \pi 2, S^2\right).
\]
We claim that $\Sigma_p$ is oriented by the normal vector $p$.  
Observe that the differential of $\tilde \Phi$ is simply multiplication by $p$ on tangent vectors to $S^2$.  The differential sends the positive frame $\{i,j\}$ at the point $k \in S^2$ to the frame $\{pi,pj\}$ at the point $pk\in \Sigma_p$.  We need to check that $\{pi,pj,p,pk\}$ is a positively oriented frame in $\R^4$.  This follows from the fact that multiplication by $p$ is an orientation preserving linear map from $\R^4$ to $\R^4$.  This proves the claim.  In particular, the normal vector to $\Sigma_p$ determined by the orientation points into the region in $S^3$ containing $p$. 

Finally, we can extend to other values of $r$ by setting
\[
\tilde \Phi\left(p,\frac{\pi}{2}+t,\xi\right) = \exp_{p\xi}(-t\nu), \quad t\in\left(-\frac \pi 2, \frac \pi 2\right)
\]
where $\exp$ is the Riemannian exponential map of the round metric and $\nu$ is the unit normal to $\tilde \Phi(p,\frac{\pi}{2},S^2)$ determined by the orientation.  It is straightforward to verify that $\tilde \Phi$ is as required. 
\end{proof}

\subsection{Geometric measure theory} Let $S^3$ be the unit three-sphere. We will use the following notation from geometric measure theory. 
\begin{itemize}
\item Let $\mathcal C(S^3)$ be the set of Caccioppoli sets in $S^3$.
\item The flat topology on $\mathcal C(S^3)$ is given by $\mathcal F(\Omega_1,\Omega_2) = \vol(\Omega_1 \operatorname{\Delta} \Omega_2)$. 
\item Let $\mathcal V(S^3)$ be the set of 2-dimensional varifolds in $S^3$. 
\item Let $\mathbf F$ denote Pitts' $\mathbf F$ metric on the space of varifolds. 
\item Let $\mathcal I(S^3,\Z_2)$ denote the space of  two-dimension integral currents with $\Z_2$-coefficients in $S^3$. 
\item Given $\Omega\in \mathcal C(S^3)$, let $\bd \Omega$ denote the current induced by $\bd \Omega$. 
\item Given $T \in I(S^3,\Z_2)$, let $\vert T\vert$ denote the varifold induced by $T$. 
\item The space $\vc(S^3)$ is the set of all pairs $(V,\Omega) \in \mathcal V(S^3)\times \mathcal C(S^3)$ such that there is a sequence $\{\Omega_i\}$ in $\mathcal C(S^3)$ with $\Omega_i \to \Omega$ and $\vert \bd \Omega_i\vert \to V$. 
\item The $\mathscr F$-topology on $\vc(S^3)$ is given by 
\[
\mathscr F \big((V_1,\Omega_1),(V_2,\Omega_2)\big) = \mathbf F(V_1,V_2) + \mathcal F(\Omega_1,\Omega_2). 
\]
\end{itemize}
The $\vc$ space has been used before in \cite{mazurowski2024infinitely} and \cite{wang2023existence}  and is very convenient for the prescribed mean curvature min-max theory. 

\subsection{Almost-Embeddings} 
Next, we review some definitions regarding almost-embedded surfaces. We follow the definitions in \cite{wang2023existence}.  Let $\Sigma$ be a closed surface. 

\begin{defn}
We say that a $C^{1,1}$ immersion $\phi\colon \Sigma \to S^3$ is an almost-embedding if near each point $p\in \phi(\Sigma)$, there is a neighborhood $U$ of $p$ in $S^3$ such that 
\begin{itemize}
\item[(i)] $\phi\inv (U)$ is a disjoint union of connected components $\Gamma_1,\hdots,\Gamma_\ell$,
\item[(ii)] $\phi|_{\Gamma_i}$ is a $C^{1,1}$ embedding for each $i$, and 
\item[(iii)] for each $i\neq j$, the sheet $\phi_i(\Gamma_i)$ lies to one side of $\phi_j(\Gamma_j)$ in $U$. 
\end{itemize} 
In other words, near each point $p$ the surface $\phi(\Sigma)$ can be decomposed into an ordered union of embedded sheets.  
\end{defn} 

We will sometimes abuse notation by writing $\Sigma$ for $\phi(\Sigma)$.  Define the regular set $\mathcal R(\Sigma)$ to be the set of points $p\in \phi(\Sigma)$ where $\Sigma$ is embedded, and define the touching set $\mathcal S(\Sigma)$ to be the set of points $p\in \phi(\Sigma)$ that belong to more than one sheet of $\Sigma$. 

\begin{defn}
A $C^{1,1}$ almost embedding $\phi\colon \Sigma\to S^3$ is called a $C^{1,1}$ boundary if there is a set $\Omega_\Sigma\in \mathcal C(S^3)$ such that 
\[
\phi_\sharp \llbracket \Sigma \rrbracket = \bd \Omega_\Sigma.
\]
Here $ \llbracket \Sigma \rrbracket$ denotes the fundamental class of $\Sigma$ and the equality is in the sense of two-dimensional currents in $S^3$. 
\end{defn}

We note that any $C^{1,1}$ boundary $\phi \colon \Sigma\to S^3$ determines an element $(V_\Sigma,\Omega_\Sigma) \in \vc(S^3)$, which by abuse of notation we will usually denote by $(\Sigma,\Omega)$.  Given a smooth function $h$ on $S^3$, the $\mathcal A^h$ functional can be naturally extended to any $C^{1,1}$ boundary by setting 
\[
\mathcal A^h(\Sigma,\Omega) = \mathcal A^h(V_\Sigma,\Omega_\Sigma) = \|V_\Sigma\|(S^3) - \int_{\Omega_\Sigma} h.
\]
Finally, we have the following definition regarding $C^{1,1}$ boundaries which are critical points of the $\mathcal A^h$ functional. 

\begin{defn}
Let $h$ be a smooth function on $S^3$.  A $C^{1,1}$ boundary $\phi \colon \Sigma \to S^3$ is called a $C^{1,1}$ $h$-boundary if $\delta \mathcal A^h_{(\Sigma,\Omega)} (X) = 0$ for all $C^1$ vector fields $X$ on $S^3$. 
\end{defn}

\subsection{Min-Max Theory} 
Next we recall some min-max notions. The original Simon-Smith min-max theory for the area functional appeared in Smith's doctoral thesis \cite{smith1982existence} (see also \cite{Colding-DeLellis03}), and later De Lellis-Pellandini \cite{DeLellis-Pellandini10} and Ketover \cite{ketover2019genus} proved a genus upper bound for the min-max minimal surfaces obtained via this theory. The adaptation to the prescribed mean curvature setting is due to Sarnataro-Stryker \cite{sarnataro2023optimal}, with further refinements by Z. Wang and the second author \cite{wang2023existence}.  Again, we will follow the notation from \cite{wang2023existence}.  

\begin{defn}
We say that a smooth function $h\colon S^3\to \R$ is admissible for the PMC min-max theory if $h$ belongs to the class $\mathcal S$ from  \cite[Proposition 0.2]{zhou2020existence} or if it satisfies condition ($\ddagger$) from \cite{zhou2020existence}.  Every positive function is automatically admissible, and the set of admissible functions is generic (in the sense of Baire category).  We note that for this definition of admissibility, no minimal surface can be contained in the zero set of $h$.
\end{defn}

Let $X$ be a cubical complex and let $Z$ be a subcomplex of $X$.  Fix a continuous map $\Phi\colon X\to \mathcal E$.  Let $\Pi$ be the set of all continuous maps $\Psi\colon X\to \mathcal E$ which are of the form 
\[
\Psi(x) = [\psi(x,1)](\Phi(x))
\]
where $\psi\colon X\times [0,1] \to \operatorname{Diff}(S^3)$ is a continuous map satisfying 
\begin{itemize}
    \item [(i)] $\psi(x,0) = \operatorname{id}$ for all $x\in X$, and 
    \item[(ii)] $\psi(z,t) = \operatorname{id}$ for all $z\in Z$ and all $t\in [0,1]$. 
    \end{itemize} 
We will refer to $\Pi$ as the $(X,Z)$-relative homotopy class of $\Phi$.  Note that if $\Phi$ admits a lift to a map into $\operatorname{Emb}(S^2,S^3)$ then every map $\Psi\in \Pi$ also admits such a lift. 

Fix a smooth function $h\colon S^3\to \R$ which is admissible for the PMC min-max theory.  Define the min-max value 
\[
L(\Pi) = \inf_{\Psi\in \Pi} \left[\sup_{x\in X} \mathcal A^h(\Psi(x))\right]. 
\]
A sequence $\{\Psi_i\}$ in $\Pi$ is called a critical sequence if 
\[
\lim_{i\to \infty} \sup_{x\in X} \Psi_i(x) = L(\Pi). 
\]
Further, given a subsequence $i_j$ and elements $x_{i_j}\in X$, we say that $\{\Psi_{i_j}(x_{i_j})\}$ is a min-max sequence if 
\[
\lim_{j\to \infty} \mathcal A^h(\Psi_{i_j}(x_{i_j})) = L(\Pi). 
\]
Finally, given a critical sequence $\{\Psi_i\}$, define the critical set 
\[
\mathcal K(\{\Psi_i\}) = \left\{(V,\Omega) \in \vc(S^3):  \begin{array}{c} \text{there is a min-max sequence $\{\Psi_{i_j}(x_{i_j})\}$} \\ \text{such that $\mathscr F(\Psi_{i_j}{(x_{i_j})}, (V,\Omega)) \to 0$.} \end{array}\right\}.
\]
Then we have the following min-max theorem \cite{wang2023existence}.  
\begin{theorem}
\label{min-max}
Let $h\colon S^3\to \R$ be a smooth function which is admissible for the PMC min-max theory. Let $(X,Z)$ be a pair as above.  Fix a continuous map $\Phi\colon X \to \mathcal E$ and define $\Pi$ as above.  Assume that 
\[
L(\Pi) > \max\left\{\sup_{z\in Z} \mathcal A^h(\Phi(z)), 0\right\}.  
\]
Let $\{\Psi_i\}$ be a critical sequence.  Then some element $(\Sigma,\Omega)$ in the critical set is a $C^{1,1}$ h-boundary with $\mathcal A^h(\Sigma,\Omega) = L(\Pi)$.  
\end{theorem}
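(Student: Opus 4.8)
The plan is to deduce the theorem from the min-max machinery already developed for the $\mathcal{A}^h$ functional by Sarnataro--Stryker \cite{sarnataro2023optimal} and in \cite{wang2023existence}, checking that the arguments there — carried out for the particular parameter spaces needed in those papers — go through for an arbitrary cubical pair $(X,Z)$. The proof has the usual three-step structure of Simon--Smith/Pitts min-max: a pull-tight deformation, extraction of an almost-minimizing element via a combinatorial argument, and an application of the regularity theory for $h$-boundaries.

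First I would run the pull-tight procedure. Fix a critical sequence $\{\Psi_i\}$. Using the strict inequality $L(\Pi) > \sup_{z\in Z}\mathcal{A}^h(\Phi(z))$, choose $\eta > 0$ with $\mathcal{A}^h(\Phi(z)) < L(\Pi) - 2\eta$ for all $z\in Z$; then for $i$ large, every slice $\Psi_i(x)$ with $\mathcal{A}^h(\Psi_i(x)) > L(\Pi) - \eta$ has $x$ bounded away from $Z$. On the (open) set of pairs $(V,\Omega)$ with $\delta\mathcal{A}^h_{(V,\Omega)}\neq 0$ one constructs, as in \cite{wang2023existence}, a continuous family of ambient diffeomorphisms isotopic to the identity that strictly decreases $\mathcal{A}^h$; composing with $\Psi_i$ and cutting off in the parameter so the deformation is the identity near $Z$, one gets a new critical sequence $\{\Psi_i^*\}\subset\Pi$ with $\mathcal{K}(\{\Psi_i^*\})\subseteq\mathcal{K}(\{\Psi_i\})$ and $\delta\mathcal{A}^h_{(V,\Omega)}=0$ for every $(V,\Omega)\in\mathcal{K}(\{\Psi_i^*\})$.

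Next I would extract an element $(\Sigma,\Omega)\in\mathcal{K}(\{\Psi_i^*\})$ that is almost minimizing in small annuli for $\mathcal{A}^h$ with respect to the $\mathscr{F}$-topology on $\vc(S^3)$. This is the Pitts combinatorial argument: were no such element to exist, every point of the critical set would admit annuli in which the $\mathcal{A}^h$-value can be pushed down, and a covering-plus-interpolation argument would produce competitors $\tilde\Psi_i\in\Pi$ — still diffeomorphisms fixing $Z$, which is legitimate since the relevant behavior occurs away from $Z$ — with $\sup_x\mathcal{A}^h(\tilde\Psi_i(x))\le L(\Pi)-\delta$, contradicting the definition of $L(\Pi)$. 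I would import this essentially verbatim from \cite{wang2023existence}; nothing in it is sensitive to $\dim X$ or to the choice of $Z$. For the resulting element, the defining min-max sequence has $\mathcal{A}^h\to L(\Pi)$, the almost-minimizing property prevents loss of mass in the limit, and lower semicontinuity then forces $\mathcal{A}^h(\Sigma,\Omega)=L(\Pi)>0$, so $\Sigma\neq\emptyset$. By the regularity theory of \cite{zhou2020existence},\cite{sarnataro2023optimal},\cite{wang2023existence}, using the admissibility of $h$ (which forbids minimal pieces inside $\{h=0\}$), the stationary, almost-minimizing limit of boundaries $(\Sigma,\Omega)$ is a $C^{1,1}$ $h$-boundary: $V=|V_\Sigma|$ for a closed almost-embedded $C^{1,1}$ surface $\Sigma$ with $\phi_\sharp\llbracket\Sigma\rrbracket=\partial\Omega$ and $\delta\mathcal{A}^h_{(\Sigma,\Omega)}(X)=0$ for all $C^1$ fields $X$.

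The main obstacle is not a new estimate but the bookkeeping of carrying the whole Pitts/Simon--Smith package over to a general cubical pair $(X,Z)$ while remaining inside the diffeomorphism-defined class $\Pi$: every deformation used in the pull-tight and almost-minimizing steps must be realized by ambient diffeomorphisms that are the identity on $Z$ and isotopic to the identity, so that the deformed map stays in $\Pi$ and still lifts to $\operatorname{Emb}(S^2,S^3)$ when $\Phi$ does. Since \cite{wang2023existence} already set up exactly this multi-parameter framework, I expect this step to be routine, with the genuinely substantial inputs — the PMC regularity theory and the constrained deformation theorems — imported wholesale.
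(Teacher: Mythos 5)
Your outline matches the standard Simon--Smith/Pitts structure (pull-tight, almost-minimizing extraction via the combinatorial argument, then PMC regularity) that underlies the result. In the paper this theorem is simply imported by citation from \cite{wang2023existence} with no proof given, and your sketch is a faithful account of what that cited proof does, so there is no substantive discrepancy to flag.
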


It follows from \cite{sarnataro2023optimal} that the regularity can be improved at points of density one; also see \cite[Lemma 1.14]{wang2023existence}.

\begin{theorem}
\label{regularity} 
Let $(\Sigma,\Omega)$ be the pair produced by Theorem \ref{min-max}.  If $\Sigma$ has no touching points, then $\Sigma$ is a disjoint union of smoothly embedded two-spheres with prescribed mean curvature $h$. \end{theorem}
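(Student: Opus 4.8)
The plan is to upgrade the $C^{1,1}$ $h$-boundary $(\Sigma,\Omega)$ furnished by Theorem \ref{min-max} to a smooth union of spheres by combining the density-one regularity improvement with the Simon-Smith genus bound. First I would unpack the hypothesis. Since $\Sigma$ has no touching points, the touching set $\mathcal S(\Sigma)$ is empty, so the $C^{1,1}$ almost-embedding $\phi\colon \Sigma\to S^3$ is an honest $C^{1,1}$ embedding. Consequently the connected components of $\phi(\Sigma)$ are pairwise disjoint $C^{1,1}$ embedded surfaces, the associated varifold $V_\Sigma$ has multiplicity one on each of them (a sheet of multiplicity $\ge 2$, or two coinciding components, would create touching points), and every point of $\Sigma$ is a point of density one.

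Next I would invoke the regularity at points of density one, which follows from \cite{sarnataro2023optimal} and \cite[Lemma 1.14]{wang2023existence}. Because every point of $\Sigma$ has density one, this promotes $\Sigma$ to a smoothly embedded surface, and the stationarity condition $\delta\mathcal A^h_{(\Sigma,\Omega)}(X)=0$ for all $C^1$ vector fields $X$ becomes the classical prescribed mean curvature equation $H_\Sigma = h|_\Sigma$ on all of $\Sigma$. (Here one uses the admissibility of $h$; in particular, no component of $\Sigma$ is a minimal surface contained in the zero set of $h$.)

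It remains to identify the topology of the components. Since $(\Sigma,\Omega)$ lies in the critical set of a critical sequence $\{\Psi_i\}\subset\Pi$, there is a min-max sequence $\{\Psi_{i_j}(x_{i_j})\}$, consisting of \emph{smoothly embedded two-spheres}, with $\mathscr F(\Psi_{i_j}(x_{i_j}),(\Sigma,\Omega))\to 0$. The Simon-Smith min-max regularity (in the PMC form of \cite{sarnataro2023optimal},\cite{wang2023existence}) shows this convergence is smooth and graphical away from at most finitely many points, so the genus monotonicity of De Lellis-Pellandini \cite{DeLellis-Pellandini10} and Ketover \cite{ketover2019genus} applies. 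As the sweepout surfaces have genus zero, the genera of the components of $\Sigma$ sum to zero; and since every closed surface embedded in $S^3$ is orientable, each component is a two-sphere. Together with the embeddedness and multiplicity-one conclusions above, this gives that $\Sigma$ is a disjoint union of smoothly embedded two-spheres with mean curvature $h$.

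I expect the genus step to be the main obstacle: one must confirm that the $\mathscr F$-convergence of the min-max sequence to $(\Sigma,\Omega)$ propagates to the smooth, finitely-many-points, correct-multiplicity convergence needed to invoke the Simon-Smith genus bound, and that ``no touching points'' genuinely pins the multiplicities to $1$ rather than merely bounding $\sum_j m_j\,\mathfrak g(\Sigma_j)$. Both points are contained in \cite{sarnataro2023optimal} and \cite{wang2023existence}, so in the end the proof is an assembly of cited results, with the work lying in matching hypotheses.
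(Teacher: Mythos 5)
Your proof is correct and follows essentially the same path as the paper, which simply cites the density-one regularity improvement (\cite{sarnataro2023optimal}, \cite[Lemma 1.14]{wang2023existence}) together with the implicit Simon--Smith genus control. You have correctly spelled out why the absence of touching points forces multiplicity one and density one everywhere, which is precisely the hypothesis needed to invoke those cited results.
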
 

\subsection{An Interpolation Theorem} In this section, we prove an interpolation theorem.  The interpolation theorem has two key ingredients.  The first is the following equivalent form of the Smale conjecture; see \cite[Page 430]{hatcher1981diffeomorphism} assertion (a) and also item (10) in the appendix of \cite{hatcher1983proof}. 

\begin{theorem}
\label{smales2}
The space of essential embeddings $\operatorname{Emb}_0(S^2,S^2\times (-1,1))$ deformation retracts onto the subspace of diffeomorphisms $S^2 \to S^2\times \{0\}$. 
\end{theorem}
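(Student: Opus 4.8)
The plan is to deduce Theorem~\ref{smales2} from the resolution of the Smale conjecture together with standard parametrized isotopy arguments, mirroring the way Hatcher extracts the homotopy type of spaces of $2$-spheres from $\operatorname{Diff}(S^3)\simeq O(4)$. Write $M=S^2\times(-1,1)$, let $S_0=S^2\times\{0\}$, and let $\iota\colon S^2\cong S_0\hookrightarrow M$ be the standard inclusion. Recall the elementary classification: an embedded $2$-sphere in $M$ is \emph{essential} if it does not bound a ball, and every essential sphere is ambiently isotopic to $S_0$. Thus $\operatorname{Emb}_0(S^2,M)$ consists of the essential embeddings, and the target of the desired retraction is the subspace $\mathcal D=\{\phi\in\operatorname{Emb}_0(S^2,M):\phi(S^2)=S_0\}$, which is canonically identified with $\operatorname{Diff}(S^2)$ via $\phi\mapsto\phi$ regarded as a self-map of $S^2$.

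I would split the retraction into two stages. Consider the ``image'' map $q\colon\operatorname{Emb}_0(S^2,M)\to\mathcal S(M)$, $\phi\mapsto\phi(S^2)$, where $\mathcal S(M)$ is the space of essential embedded $2$-spheres in $M$; this is a locally trivial fiber bundle with fiber and structure group $\operatorname{Diff}(S^2)$, the local triviality near a sphere $\Sigma$ coming from a tubular neighborhood of $\Sigma$ (nearby spheres are graphs over $\Sigma$, and any embedding onto such a graph factors as the graph composed with a reparametrization). \emph{Stage~1} is to deformation retract $\mathcal S(M)$ onto the line $\{S^2\times\{t\}:t\in(-1,1)\}$ of horizontal level spheres. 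Granting this, the deformation retraction fixes the level spheres throughout, so by the relative homotopy lifting property for the fibration $q$ it lifts to a deformation retraction of $\operatorname{Emb}_0(S^2,M)$ onto $q^{-1}\bigl(\{S^2\times\{t\}\}\bigr)$, which (the level line being contractible) is $\operatorname{Diff}(S^2)\times(-1,1)$. \emph{Stage~2} is then immediate: retract the $(-1,1)$-coordinate to $0$, landing in $\mathcal D\cong\operatorname{Diff}(S^2)$; Smale's classical theorem $\operatorname{Diff}(S^2)\simeq O(3)$ enters only if one wishes to identify the resulting homotopy type, which the statement does not require.

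The whole content is therefore concentrated in Stage~1, the assertion that the space of essential embedded $2$-spheres in $S^2\times(-1,1)$ deformation retracts onto the $1$-parameter family of level spheres. This is the ``essential'' analogue of Hatcher's theorem that the space of unknotted $2$-spheres in $\R^3$ deformation retracts onto the round spheres, and it is \emph{equivalent} to the Smale conjecture: through the restriction and gluing fibrations relating $\operatorname{Diff}(M)$, $\operatorname{Diff}(D^3\ \mathrm{rel}\ \partial)$, $\operatorname{Diff}(S^2\times I\ \mathrm{rel}\ \partial)$ and $\mathcal S(M)$, the homotopy type of $\mathcal S(M)$ is pinned down precisely when the Smale conjecture holds (equivalently, when $\operatorname{Diff}(D^3\ \mathrm{rel}\ \partial)$ is contractible). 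I do not expect to reprove this; the cleanest course is to quote it, as it is recorded as assertion~(a) on p.~430 of \cite{hatcher1981diffeomorphism} and as item~(10) in the appendix of \cite{hatcher1983proof}, where it is deduced from $\operatorname{Diff}(S^3)\simeq O(4)$. The two-stage reduction above should be included only to make the logical dependence on the Smale conjecture transparent, with Hatcher cited for Stage~1, which is the genuine obstacle.
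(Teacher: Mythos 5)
The paper gives no proof of this statement at all; it is quoted directly from Hatcher, citing \cite[p.~430, assertion (a)]{hatcher1981diffeomorphism} and item~(10) in the appendix of \cite{hatcher1983proof}, and your proposal ultimately does the same thing by deferring to those exact references. The only difference is that you interpose a two-stage reduction (fibration $\operatorname{Emb}_0 \to \mathcal S(M)$, then retract $\mathcal S(M)$ onto level spheres and lift), which is harmless but superfluous: the cited items already state the result for the space of parametrized embeddings, not merely for the space of images, so there is nothing left to bootstrap.
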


The second main ingredient is the following filigree retraction procedure devised by Ketover-Liokumovich \cite[Proposition 6.1]{ketover2025smale}.  We will state a less general version which suffices for our purposes.  The filigree retraction argument uses embeddings, which explains our need to consider parameterized two-spheres.   Given $\Sigma \in \mathcal E$, let $N_r(\Sigma)$ denote the $r$-tubular neighborhood of $\Sigma$. Conclusion (iii) of the following proposition is not explicitly stated in \cite{ketover2025smale}, but it follows from the proof.  

\begin{prop}[\cite{ketover2025smale}]
\label{filigree}
Fix some $\Sigma \in \mathcal E$.    Then for any sufficiently small $r$, there exists an $\eps > 0$ so that the following is true.  Let $X$ be a three dimensional cubical complex with no boundary.  For any continuous map $f \colon X \to \operatorname{Emb}(S^2,S^3)$ satisfying 
\begin{itemize}
\item[(i)] $\mathbf F(f(x),\Sigma) < \eps$ for all $x\in X$,
\end{itemize}
there exists a homotopy $F\colon X \times [0,1] \to \operatorname{Emb}(S^2,S^3)$ such that 
\begin{itemize}
\item[(i)] $F(x,0) = f(x)$ for all $x\in X$,
\item[(ii)] $F(x,1)$ is contained in $N_r(\Sigma)$ for all $x\in X$,
\item[(iii)] $F(x,1)$ does not bound a ball in $N_r(\Sigma)$ for all $x\in X$. 
\end{itemize}
\end{prop}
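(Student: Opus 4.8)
The plan is to obtain conclusions (i) and (ii) directly from \cite[Proposition 6.1]{ketover2025smale}, of which the statement above is the special case where $X$ is three-dimensional and without boundary and the target is a single surface $\Sigma\in\mathcal E$; the task that remains is conclusion (iii). I work with $\Z_2$ coefficients throughout. Fix $r$ small enough that $N_r(\Sigma)$ is diffeomorphic to $S^2\times(-r,r)$, that the nearest-point projection $\varpi\colon N_r(\Sigma)\to\Sigma$ is a fibration by short normal geodesic segments and is $(1+Cr)$-Lipschitz, and that $S^3\setminus N_r(\Sigma)$ is a union of two open balls; then take $\eps$ no larger than the constant furnished by \cite[Proposition 6.1]{ketover2025smale}, shrinking it further below where needed. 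Since $\varpi$ is a homotopy equivalence, $H_2(N_r(\Sigma);\Z_2)\cong H_2(\Sigma;\Z_2)\cong\Z_2$, with generator $[\Sigma]$; and as the boundary of a ball is null-homologous, it is enough to prove $[F(x,1)]=[\Sigma]\neq0$ in $H_2(N_r(\Sigma);\Z_2)$ for each $x\in X$.

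The first step is a mass lower bound: there is $c_0=c_0(\Sigma,r)>0$ such that every mod-$2$ cycle $T$ in $N_r(\Sigma)$ with $[T]\neq0$ obeys $\mass(T)\ge c_0$. Indeed $\varpi_\sharp T$ is a mod-$2$ cycle in the surface $\Sigma$, hence equals $\theta\,\llbracket\Sigma\rrbracket$ for a constant $\theta\in\Z_2$ (the constancy theorem); since $[T]\neq0$ and $\varpi_*$ is an isomorphism we get $\theta=1$, so $\mass(\varpi_\sharp T)=\area(\Sigma)$, and combining this with $\mass(\varpi_\sharp T)\le(1+Cr)^2\mass(T)$ yields $\mass(T)\ge(1+Cr)^{-2}\area(\Sigma)=:c_0>0$.

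The second step uses the internal structure of the Ketover--Liokumovich retraction. Because $\mathbf F(f(x),\Sigma)<\eps$ and $f(x)$ is embedded, the part of $f(x)$ inside a thin subneighborhood of $\Sigma$ is — away from finitely many discs $D_1,\dots,D_k\subset\Sigma$ of total area $o_\eps(1)$ (i.e.\ tending to $0$ as $\eps\to0$) — a small normal graph over $\Sigma\setminus\bigcup_iD_i$ (a fold, or an additional sheet, over a region would add area comparable to that region, hence at most $O(\eps)$), while the complementary filigree part has area $o_\eps(1)$; the retraction $F$ transports only this filigree into $N_r(\Sigma)$, perturbing the graphical core by at most a small normal graph. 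Hence $F(x,1)=P_x\cup Q_x$, where $P_x$ is a small normal graph over $\Sigma\setminus\bigcup_iD_i$, $\area(Q_x)=o_\eps(1)$, and $P_x$ and $Q_x$ meet along small circles $\gamma_1,\dots,\gamma_k$. Capping each $\gamma_i$ with a small graphical disc $\Delta_i$ over $D_i$ produces mod-$2$ cycles $\bar P_x=P_x+\sum_i\Delta_i$ and $\bar Q_x=Q_x+\sum_i\Delta_i$ in $N_r(\Sigma)$ with $[F(x,1)]=[\bar P_x]+[\bar Q_x]$. Now $\bar P_x$ is a small normal graph over all of $\Sigma$, hence isotopic to $\Sigma$ within $N_r(\Sigma)$, so $[\bar P_x]=[\Sigma]$; and $\mass(\bar Q_x)\le\area(Q_x)+\sum_i\area(\Delta_i)=o_\eps(1)<c_0$ once $\eps$ is small, so $[\bar Q_x]=0$ by the mass bound. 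Therefore $[F(x,1)]=[\Sigma]\neq0$, establishing (iii).

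The main obstacle is the second step: one must extract from the somewhat intricate proof of \cite[Proposition 6.1]{ketover2025smale} the quantitative claims that the retraction is almost nonincreasing in area near $\Sigma$, that it fixes a graphical core lying over all of $\Sigma$ up to a small normal graph, and that the only mass it moves into $N_r(\Sigma)$ is low-area filigree. Since there is no a priori curvature bound on the $f(x)$, the reduction of $\mathbf F$-closeness to ``small normal graph over most of $\Sigma$ plus low-area filigree'' cannot be obtained from Allard-type regularity and instead must be read off from the good/bad region decomposition in their argument; granting that structural input, the homological bookkeeping above is routine.
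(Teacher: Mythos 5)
The paper does not actually prove this proposition: it cites \cite[Proposition 6.1]{ketover2025smale} for conclusions (i)--(ii) and simply asserts that (iii) ``is not explicitly stated in \cite{ketover2025smale}, but it follows from the proof,'' with no further explanation. So there is no argument in the paper against which to compare your proposal line by line; you are attempting to reconstruct the missing justification for (iii), which is a reasonable thing to do.

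Your first step (the mass lower bound $c_0$ for nontrivial mod-$2$ cycles in $N_r(\Sigma)$, via the Lipschitz projection $\varpi$ and the constancy theorem) and your third step (the homological bookkeeping once $F(x,1)$ is decomposed into a graphical cycle $\bar P_x$ and a low-mass cycle $\bar Q_x$) are both correct. The genuine gap is the second step, and you are right to flag it. The claim that $\mathbf F$-closeness to $\Sigma$ forces $f(x)$ to split into a small normal graph over $\Sigma\setminus\bigcup D_i$ plus filigree of small area is plausible but not a consequence of the $\mathbf F$ metric alone (no curvature bound is available, as you note); more importantly, the claim that the Ketover--Liokumovich retraction moves only the filigree into $N_r(\Sigma)$, leaves the graphical core as a graph, and keeps the filigree's area small at time $1$, is a strong statement about the internal mechanics of their homotopy that is neither stated in their Proposition 6.1 nor something you have verified. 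Without that verification, the decomposition $F(x,1)=P_x\cup Q_x$ with the stated properties is an assumption, and the proof is incomplete. Since the entire point of citing (iii) is to extract exactly this structural information from the Ketover--Liokumovich construction, the honest status of your proposal is: a correct derivation of (iii) from a plausible but unproven structural property of the retraction, i.e.\ a proof outline with a real gap rather than a proof.
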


We now combine these ingredients to prove the interpolation theorem.

\begin{theorem}
\label{interpolation}
Fix some $\Sigma\in \mathcal E$.  There is an $\eps(\Sigma) > 0$ so that the following is true.  Let $X$ be a three dimensional cubical complex with no boundary.  Assume $f\colon X\to \mathcal E$ is a continuous map satisfying 
\begin{itemize}
\item[(i)] $\mathbf F(f(x),\Sigma)<\eps$ for all $x\in X$,
\item[(ii)] $f$ admits a lift to a map $\tilde f \colon X \to \operatorname{Emb}(S^2,S^3)$. 
\end{itemize}
Then there exists a continuous map $F\colon X\times [0,1]\to \mathcal E$ such that 
\begin{itemize}
\item[(i)] $F(x,0) = f(x)$ for all $x\in X$,
\item[(ii)] $F(x,1) = \Sigma$ for all $x\in X$. 
\end{itemize}
\end{theorem}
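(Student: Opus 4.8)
The plan is to combine the two ingredients — the filigree retraction (Proposition \ref{filigree}) and the equivalent form of the Smale conjecture (Theorem \ref{smales2}) — in two stages. First I would apply Proposition \ref{filigree} to the lift $\tilde f \colon X \to \operatorname{Emb}(S^2,S^3)$: choosing $r$ small enough that $N_r(\Sigma)$ is a tubular neighborhood diffeomorphic to $S^2 \times (-1,1)$, and then taking $\eps = \eps(\Sigma, r)$ as provided by the proposition, condition (i) guarantees that $\mathbf F(\tilde f(x),\Sigma) < \eps$ for all $x$. The proposition then yields a homotopy $F_1 \colon X \times [0,1] \to \operatorname{Emb}(S^2,S^3)$ from $\tilde f$ to a map $g$ whose image lies in $N_r(\Sigma)$ and, crucially by conclusion (iii), never bounds a ball in $N_r(\Sigma)$. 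Composing with the projection $\pi \colon \operatorname{Emb}(S^2,S^3) \to \mathcal E$ turns this into a homotopy in $\mathcal E$ from $f$ to $\pi \circ g$, which is the first half of the desired $F$.

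Second, I would identify $N_r(\Sigma)$ with $S^2 \times (-1,1)$ via a fixed diffeomorphism carrying $\Sigma$ to $S^2 \times \{0\}$. Under this identification, each embedded sphere $g(x)$ that does not bound a ball is an \emph{essential} embedded $S^2$ in $S^2 \times (-1,1)$, so $g$ factors through $\operatorname{Emb}_0(S^2, S^2 \times (-1,1))$. Theorem \ref{smales2} gives a deformation retraction of that space onto the space of diffeomorphisms $S^2 \to S^2 \times \{0\}$; applying it to $g$ produces a homotopy $F_2$, staying inside $\operatorname{Emb}_0(S^2, S^2\times(-1,1))$, from $g$ to a map $g'$ whose image is always the single slice $S^2 \times \{0\}$, i.e.\ $\Sigma$. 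Projecting to $\mathcal E$ again (the image of every $g'(x)$ is exactly $\Sigma$), this is a homotopy in $\mathcal E$ from $\pi \circ g$ to the constant map at $\Sigma$. Concatenating $F_1$ and $F_2$ (and reparameterizing the time interval) gives the required $F \colon X \times [0,1] \to \mathcal E$ with $F(\cdot,0) = f$ and $F(\cdot,1) \equiv \Sigma$.

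The main technical point to be careful about — and the step I expect to be the principal obstacle — is the passage between $\operatorname{Emb}(S^2,S^3)$ and $\mathcal E$, and in particular making sure the hypotheses of the two cited results line up. Proposition \ref{filigree} requires an $\mathbf F$-bound on $f(x)$, while the interpolation theorem's hypothesis (i) is phrased for $f$ valued in $\mathcal E$; I need to note that the $\mathbf F$-distance depends only on the underlying varifold, so $\mathbf F(\tilde f(x), \Sigma)$ is controlled by $\mathbf F(f(x),\Sigma)$ and the hypothesis transfers verbatim. I also need the parameterized retraction of Theorem \ref{smales2} to be continuous in the family parameter $x \in X$ — this is automatic since a deformation retraction is a single continuous homotopy on the whole space, which we precompose with $g$ — and I must check that the identification $N_r(\Sigma) \cong S^2 \times (-1,1)$ is compatible with the notion of ``essential embedding,'' which is exactly the content of conclusion (iii) of Proposition \ref{filigree}. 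A minor bookkeeping issue is that $F_1$ lands in $\operatorname{Emb}(S^2,S^3)$ but $F_2$ is naturally defined on $\operatorname{Emb}_0(S^2, S^2\times(-1,1))$; one reconciles these by extending $F_2$ to $S^3$ using the inclusion $N_r(\Sigma) \hookrightarrow S^3$, which is harmless because all embeddings involved already have image inside $N_r(\Sigma)$.
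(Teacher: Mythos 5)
Your proposal is correct and follows essentially the same two-step argument as the paper: first apply the Ketover--Liokumovich filigree retraction to push $\tilde f$ into $N_r(\Sigma)$ with the non-ball-bounding conclusion, then invoke the Smale conjecture in the form of Theorem~\ref{smales2} to retract onto the zero slice, concatenating and projecting via $\pi$ to finish. The additional technical remarks you flag (the $\mathbf F$-distance depending only on the underlying varifold, continuity of the family, compatibility of the tubular-neighborhood identification with essentiality) are all sound and do not constitute deviations from the paper's route.
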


\begin{proof}
Fix a small number $r > 0$ for which $N_r(\Sigma)$ is diffeomorphic to $S^2\times (-1,1)$.  Assuming $\eps$ is small enough, we can apply Proposition \ref{filigree} to $\tilde f$ to obtain a homotopy $H_1\colon X\times [0,1]\to \operatorname{Emb}(S^2,S^3)$ satisfying 
\begin{itemize}
\item[(i)] $H_1(x,0)=\tilde f(x)$ for all $x\in X$,
\item[(ii)] $H_1(x,1)$ is contained in $N_r(\Sigma)$ for all $x\in X$,
\item[(iii)] $H_1(x,1)$ does not bound a ball in $N_r(\Sigma)$ for all $x\in X$. 
\end{itemize}
Since each embedding $H_1(\cdot,1)$ has image in $N_r(\Sigma)$, we can now apply Theorem \ref{smales2} to find a second homotopy 
$H_2\colon X\times [0,1] \to \operatorname{Emb}(S^2,N_r(\Sigma))$ such that 
\begin{itemize}
\item[(i)] $H_2(x,0)= H_1(x,1)$ for all $x\in X$,
\item[(ii)] $ H_2(x,1)$ has image $\Sigma$ for all $x\in X$. 
\end{itemize}
Finally define 
\[
F(x,t) = \begin{cases} 
\pi\circ  H_1(x,2t), &\text{if } t\in [0,\frac 1 2]\\
\pi \circ  H_2(x,2t-1), &\text{if } t \in [\frac 1 2, 1].
\end{cases}
\]
It is easy to see that $F$ is as required. 
\end{proof}

\section{Proof of Main Result} 
\label{thm-proof} 
In this section, we prove the main result Theorem \ref{main}.  Fix a smooth function $h\colon S^3\to \R$ which is admissible for the PMC min-max theory and assume that $\vert h\vert < h_0$.  By replacing $h$ with $-h$ if necessary, we can assume that 
\begin{equation}
\label{positive-integral} 
\int_{S^3} h \ge 0. 
\end{equation}
The proof is by contradiction.  Assume that there is only one smooth, oriented, embedded two-sphere in $S^3$ with mean curvature given by $h$.  Denote this two-sphere  by $(\Gamma,\Theta)\in \mathcal E$.

\subsection{One-Parameter Min-Max}
In the first step, we set up a collection of 1-parameter min-max problems, and observe that each of these must detect $(\Gamma,\Theta)$.  Fix a very small number $\delta > 0$ to be specified later.  For each $p\in S^3$, define a map $\Phi_p\colon [\delta,\pi-\delta]\to \mathcal E$ by setting 
\[
\Phi_p(r) = (\bd B_p(r),B_p(r)). 
\]
Let $\Pi_p$ denote the $([\delta,\pi-\delta],\{\delta,\pi-\delta\})$-relative homotopy class of $\Phi_p$.  Thus $\Pi_p$ consists of sweepouts that originate at a small sphere near $p$ and terminate in a small sphere near the antipodal point $-p$. 

\begin{prop}
\label{width-bound}
The min-max value $L(\Pi_p)$ satisfies 
\[
\max\bigg\{\mathcal A^h(\bd B_p(\delta), B_p(\delta)), \mathcal A^h(\bd B_p(\pi-\delta), B_p(\pi-\delta)), 0\bigg\} < L(\Pi_p) < 4\pi + 2\pi^2 h_0. 
\]
\end{prop}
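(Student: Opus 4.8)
The plan is to establish the two bounds essentially independently: the upper bound by testing the min-max class $\Pi_p$ with the explicit sweepout $\Phi_p$ itself, and the lower bound by combining the isoperimetric inequality on the round $S^3$ with the fact that the two endpoint values of $\mathcal A^h$ tend to $0$ as $\delta\to 0$.

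\emph{Upper bound.} Since $\Phi_p\in\Pi_p$ (witnessed by the constant homotopy $\psi\equiv\operatorname{id}$), we have $L(\Pi_p)\le\sup_{r\in[\delta,\pi-\delta]}\mathcal A^h(\Phi_p(r))$, where $\mathcal A^h(\Phi_p(r))=\area(\bd B_p(r))-\int_{B_p(r)}h=4\pi\sin^2 r-\int_{B_p(r)}h$. I would bound the two terms separately: $4\pi\sin^2 r\le 4\pi$, and, since $h>-h_0$ and the volume of a geodesic ball is increasing in its radius, $-\int_{B_p(r)}h\le h_0\vol(B_p(r))\le h_0\vol(B_p(\pi-\delta))<2\pi^2 h_0$, the last inequality being strict because $\vol(B_p(\pi-\delta))<\vol(S^3)=2\pi^2$. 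Adding these, $\mathcal A^h(\Phi_p(r))<4\pi+2\pi^2 h_0$ for every $r$, hence $L(\Pi_p)<4\pi+2\pi^2 h_0$. The only point to watch is keeping the inequality strict, which is automatic since the two terms are never simultaneously saturated.

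\emph{Lower bound.} Fix an arbitrary $\Psi\in\Pi_p$. Because $\psi(z,\cdot)=\operatorname{id}$ for $z\in\{\delta,\pi-\delta\}$, the path $\Psi$ begins at $(\bd B_p(\delta),B_p(\delta))$ and ends at $(\bd B_p(\pi-\delta),B_p(\pi-\delta))$, so the function $r\mapsto\vol(\Omega_{\Psi(r)})$ (continuous, since enclosed volume is continuous on $\mathcal E$) runs from $\vol(B_p(\delta))$ to $\vol(B_p(\pi-\delta))$. Provided $\delta<\frac\pi2$, so that $\vol(B_p(\delta))<\pi^2<\vol(B_p(\pi-\delta))$, the intermediate value theorem yields $r^*$ with $\vol(\Omega_{\Psi(r^*)})=\pi^2=\frac12\vol(S^3)$. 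At this parameter the isoperimetric inequality on $S^3$ (geodesic balls are isoperimetric, and the half-volume ball is a hemisphere) gives $\area(\Sigma_{\Psi(r^*)})\ge\area(S^2)=4\pi$, whence $\mathcal A^h(\Psi(r^*))\ge 4\pi-h_0\pi^2$ since $|h|<h_0$. Taking the supremum over $r$ and then the infimum over $\Psi$ gives $L(\Pi_p)\ge 4\pi-h_0\pi^2$, which is strictly positive because $h_0<1$ (clear from \eqref{h0def}, whose left side is strictly increasing for $h\ge0$, negative at $h=0$, and positive at $h=1$) and $\pi^2<4\pi$.

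\emph{Comparison with the endpoints.} It remains to see that $4\pi-h_0\pi^2$ strictly exceeds the two boundary values. Using $|h|<h_0$ gives $\mathcal A^h(\bd B_p(\delta),B_p(\delta))=4\pi\sin^2\delta-\int_{B_p(\delta)}h\le 4\pi\sin^2\delta+h_0\vol(B_p(\delta))$, and, using in addition the normalization $\int_{S^3}h\ge0$ from \eqref{positive-integral} in the form $\int_{B_p(\pi-\delta)}h=\int_{S^3}h-\int_{B_{-p}(\delta)}h\ge-h_0\vol(B_{-p}(\delta))$, one also gets $\mathcal A^h(\bd B_p(\pi-\delta),B_p(\pi-\delta))\le 4\pi\sin^2\delta+h_0\vol(B_p(\delta))$. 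Both boundary values are thus at most $4\pi\sin^2\delta+h_0\vol(B_p(\delta))$, which tends to $0$ as $\delta\to0$; so choosing $\delta$ small enough (part of the ``to be specified later'' choice, to be intersected with any further smallness required in the later steps) forces $\max\{\mathcal A^h(\bd B_p(\delta),B_p(\delta)),\mathcal A^h(\bd B_p(\pi-\delta),B_p(\pi-\delta)),0\}<4\pi-h_0\pi^2\le L(\Pi_p)$, completing the proof. There is no genuine obstacle here: the one substantive input is the half-volume isoperimetric lower bound, and the only delicate choice is that $\delta$ be taken small enough.
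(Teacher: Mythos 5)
Your proof is correct and follows essentially the same route as the paper's: test the class with $\Phi_p$ itself for the upper bound, and use the intermediate value theorem plus the $S^3$ isoperimetric inequality at the half-volume slice for the lower bound, then shrink $\delta$ to beat the endpoint values. You fill in a few details the paper leaves implicit (the explicit use of $\int_{S^3}h\ge 0$ to bound the $\pi-\delta$ endpoint, the continuity of enclosed volume, and the check $h_0<1$), but the decomposition and the key input are identical.
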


\begin{proof}
First we concentrate on the lower bound. Using \eqref{positive-integral} and the fact that $\vert h\vert < h_0$, we see that  
\[
\max\bigg\{\mathcal A^h(\bd B_p(\delta), B_p(\delta)), \mathcal A^h(\bd B_p(\pi-\delta), B_p(\pi-\delta)), 0\bigg\} \le \area(\bd B_p(\delta)) + h_0 \vol(B_p(\delta)). 
\]
We can make the right hand side arbitrarily close to 0 by choosing $\delta$ small enough.  Now consider any map $\Psi \in \Pi_p$.  There must be some $t$ for which $\Psi(t) = (\Sigma,\Omega)$ satisfies $\vol(\Omega) = \frac{1}{2} \vol(S^3) = \pi^2$.  By the isoperimetric inequality on $S^3$, this implies that 
\[
\area(\Sigma) \ge 4\pi. 
\]
It follows that 
\[
\mathcal A^h(\Sigma,\Omega) = \area(\Sigma) - \int_\Omega h \ge 4\pi - h_0 \pi^2 > 0.
\]
This proves the lower bound. 

To prove the upper bound, it suffices to show that 
\[
\sup_{r\in [\delta,\pi-\delta]} \mathcal A^h(\Phi_p(r)) < 4\pi + 2 \pi^2 h_0. 
\]
To see this, note that for any $r \in [\delta,\pi-\delta]$ we have 
\[
\mathcal A^h(\bd B_p(r), B_p(r)) = \area(\bd B_p(r)) - \int_{B_p(r)} h < 4\pi + h_0\vol(S^3) = 4\pi + 2 \pi^2 h_0. 
\]
The result follows. 
\end{proof}

We will also need the following result from \cite{sarnataro2023optimal}, which exploits the monotonicity formula to convert a lower bound on density into a lower bound on area.  

\begin{prop}[\cite{sarnataro2023optimal} Proposition 19.3]
\label{density} Let $S^3$ be the unit 3-sphere.  Assume that $V$ is a two-dimensional varifold in $S^3$ with $h_0$-bounded first variation.  If the density of $V$ satisfies $\Theta^2(V,x) \ge 2$ for some $x\in S^3$ then 
\[
\|V\|(S^3) \ge \frac{8\pi}{1 + \frac{h_0^2}{4}} = 4\pi + 2\pi^2 h_0, 
\]
where the last equality uses the definition \eqref{h0def} of $h_0$. 
\end{prop}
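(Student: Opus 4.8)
The plan is to prove this by a monotonicity-formula argument for the density ratio of $V$, centered at the point $x$ where $\Theta^2(V,x)\ge 2$. Write $\rho(\cdot)=\dist_{S^3}(x,\cdot)$ and $A(r)=\|V\|(B_r(x))$, and let $\rho_0\in(0,\pi/2)$ be determined by $\cot\rho_0=h_0/2$, so that a geodesic sphere of radius $\rho_0$ in $S^3$ has mean curvature exactly $h_0$; note that $2\rho_0<\pi$. The first thing I would do is record the model computation that dictates the sharp constant: if $V_0$ denotes a geodesic sphere of radius $\rho_0$ taken with multiplicity two, with $x\in\supp V_0$, then slicing $\partial B_{\rho_0}(y)$ by the distance spheres about $x$ gives the clean identity $\|V_0\|(B_r(x))=4\pi(1-\cos r)$ for all $r\le 2\rho_0$, whence $\|V_0\|(S^3)=4\pi(1-\cos 2\rho_0)=8\pi\sin^2\rho_0=\tfrac{8\pi}{1+h_0^2/4}$. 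This simultaneously identifies the extremal configuration and shows that the comparison should be evaluated at radius $r=2\rho_0$.

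The core step is the monotonicity estimate. I would start from the first variation identity $\delta V(\phi(\rho)\nabla\rho)=-\int\langle\vec H,\phi(\rho)\nabla\rho\rangle\,d\|V\|$ for radial test functions $\phi$, use the Hessian comparison $\operatorname{Hess}\rho=\cot\rho\,(g-d\rho\otimes d\rho)$ on $S^3$ to expand $\div_{T_yV}(\phi(\rho)\nabla\rho)$, invoke the bound $|\vec H|\le h_0$ together with Cauchy--Schwarz in the form $|\langle\vec H,\nabla\rho\rangle|\le h_0\,|(\nabla\rho)^\perp|$, and---crucially---retain the nonnegative term $|(\nabla\rho)^\perp|^2\ge 0$. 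After choosing the radial weight $\phi$ appropriately and carrying out the standard manipulation, this should show that a suitably corrected density ratio, of the shape $r\mapsto A(r)/\bigl(2\pi(1-\cos r)\bigr)$---where $2\pi(1-\cos r)$ is the area of a totally geodesic $2$-disk of radius $r$ in $S^3$ and the correction comes from the $h_0$ term---is monotone non-decreasing on $(0,2\rho_0]$. Letting $r\to 0$ the ratio tends to $\Theta^2(V,x)\ge 2$; evaluating at $r=2\rho_0$ then gives $A(2\rho_0)\ge 2\cdot 2\pi(1-\cos 2\rho_0)=8\pi\sin^2\rho_0=\tfrac{8\pi}{1+h_0^2/4}$, and since $B_{2\rho_0}(x)\subseteq S^3$ we conclude $\|V\|(S^3)\ge A(2\rho_0)\ge\tfrac{8\pi}{1+h_0^2/4}$. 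The remaining identity $\tfrac{8\pi}{1+h_0^2/4}=4\pi+2\pi^2h_0$ is elementary: clearing denominators gives $8\pi=(4\pi+2\pi^2h_0)(1+h_0^2/4)$, and expanding and rearranging this recovers the defining cubic \eqref{h0def} for $h_0$.

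The step I expect to be the real obstacle is pinning down the monotonicity formula in $S^3$ with the \emph{sharp} constant and the correct interval of validity. One must choose the radial weight $\phi$ so that the $h_0$-contribution is exactly absorbed by the slack in Cauchy--Schwarz, with the doubled $h_0$-geodesic-sphere $V_0$ saturating every inequality on $(0,2\rho_0]$, and one must track the curvature terms carefully enough to see that the monotone quantity remains non-decreasing all the way out to $r=2\rho_0$ (rather than only up to the smaller radius $\rho_0$ at which $2\cot\rho\ge h_0$). A possibly cleaner alternative is to stereographically project $S^3\setminus\{-x\}$ onto $\R^3$, where the round metric becomes conformally Euclidean and the classical Euclidean monotonicity formula for varifolds of bounded generalized mean curvature applies directly; the cost is then controlling how the conformal factor distorts both the area and the mean-curvature bound. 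In any case, the statement is recorded as \cite[Proposition 19.3]{sarnataro2023optimal}.
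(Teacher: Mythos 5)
The paper offers no argument here---Proposition~\ref{density} is simply quoted from \cite{sarnataro2023optimal}---so there is no in-house proof to compare against, and your attempt has to be judged on its own terms. Your extremal analysis is correct: the doubled geodesic sphere of radius $\rho_0$ with $\cot\rho_0=h_0/2$ is the right comparison object, the spherical-cap identity $\|V_0\|(B_r(x))=4\pi(1-\cos r)$ for $r\le 2\rho_0$ is in fact exact, and the algebra tying $8\pi\sin^2\rho_0=\frac{8\pi}{1+h_0^2/4}$ to the cubic \eqref{h0def} checks out. The gap, which you flag yourself, is the monotonicity statement: you posit that some ``suitably corrected'' version of $r\mapsto A(r)/\big(2\pi(1-\cos r)\big)$ is non-decreasing on $(0,2\rho_0]$, and then read off $A(2\rho_0)\ge 2\cdot 2\pi(1-\cos 2\rho_0)$ as though the correction silently disappeared. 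This is not a routine step to fill in. Since $h_0\approx 0.547$ one has $\rho_0>\pi/4$, so the comparison radius $2\rho_0$ lies strictly beyond $\pi/2$; on $(\pi/2,2\rho_0]$ the Hessian factor $\cot\rho$ in your first-variation expansion is negative, and the coefficient $\phi\cot\rho-\phi'$ you rely on to absorb the $h_0$-term via completing the square cannot be kept nonnegative by any sign-definite radial weight. You identify $\rho_0$ (where $2\cot\rho=h_0$) as the threshold of difficulty, but $\pi/2$ is the more serious one, and both sit inside the interval you need. Until a sharp monotone quantity valid all the way to $2\rho_0$ is actually produced, the argument is incomplete.

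There is a much shorter route that sidesteps intrinsic monotonicity entirely. View $V$ as a $2$-varifold in $\R^4$ through the inclusion $S^3\subset\R^4$. The outward unit normal of $S^3$ is the position vector $\vec x$, and $\div_P \vec x = 2$ on every $2$-plane $P$, so the $\R^4$ generalized mean curvature is $\vec H_{\R^4}=\vec H_{S^3}-2\vec x$; since $\vec H_{S^3}\perp\vec x$ this gives $|\vec H_{\R^4}|^2=|\vec H_{S^3}|^2+4\le h_0^2+4$ $\|V\|$-a.e. The Li--Yau/Willmore inequality for compactly supported $2$-varifolds in Euclidean space with square-integrable mean curvature (Kuwert--Schätzle, valid in any codimension) gives
\[
\tfrac14\int|\vec H_{\R^4}|^2\,d\|V\|\ \ge\ 4\pi\,\Theta^2(V,x)\ \ge\ 8\pi,
\]
hence $(h_0^2+4)\,\|V\|(S^3)\ge 32\pi$, which is exactly $\|V\|(S^3)\ge \tfrac{8\pi}{1+h_0^2/4}$, sharp for the doubled $h_0$-sphere. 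If you do want to salvage your intrinsic monotonicity approach, you would essentially have to reprove this Euclidean inequality in disguise, so the extrinsic reduction is the cleaner path.
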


Applying Theorem \ref{min-max} to the homotopy class $\Pi_p$, there is a $C^{1,1}$ $h$-boundary $(\Sigma_p,\Omega_p)$ satisfying $\mathcal A^h(\Sigma_p,\Omega_p) = L(\Pi_p)$.  Since $L(\Pi_p)< 4\pi + 2\pi^2 h_0$, it follows from Proposition \ref{density} that the density of $\Sigma_p$ is one everywhere.  In particular, $\Sigma_p$ has no touching points, and so Theorem \ref{regularity} implies that $\Sigma_p$ is a disjoint union of embedded two-spheres with prescribed mean curvature $h$.  Thus $(\Sigma_p,\Omega_p) = (\Gamma,\Theta)$ and it follows that $
L(\Pi_p) = \mathcal A^h(\Gamma,\Theta)$
for every $p\in S^3$.  To summarize, we have the following result. 

\begin{prop}
\label{one-parameter}
    For every $p\in S^3$, we have 
    $
    L(\Pi_p) = \mathcal A^h(\Gamma,\Theta).
    $
    Moreover, for any critical sequence $\{\Psi_i\}$ for $\Pi_p$, the pair $(\Gamma,\Theta)$ belongs to the critical set $\mathcal K(\{\Psi_i\})$.  
\end{prop}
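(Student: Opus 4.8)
The plan is to establish Proposition \ref{one-parameter} by combining the min-max existence and regularity theory with the width bounds already proved. The statement has two parts: the identity $L(\Pi_p) = \mathcal A^h(\Gamma,\Theta)$ for every $p$, and the assertion that $(\Gamma,\Theta)$ lies in every critical set $\mathcal K(\{\Psi_i\})$. I would prove them in that order, since the second essentially follows from the machinery used to establish the first.

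First I would invoke Proposition \ref{width-bound}, which gives $L(\Pi_p) > \max\{\sup_{z \in \{\delta,\pi-\delta\}} \mathcal A^h(\Phi_p(z)), 0\}$ and $L(\Pi_p) < 4\pi + 2\pi^2 h_0$. The lower bound is exactly the hypothesis needed to apply the min-max theorem, Theorem \ref{min-max}, to the $([\delta,\pi-\delta],\{\delta,\pi-\delta\})$-relative homotopy class $\Pi_p$. That theorem, applied to any critical sequence $\{\Psi_i\}$, yields a pair $(\Sigma_p,\Omega_p)$ in the critical set $\mathcal K(\{\Psi_i\})$ which is a $C^{1,1}$ $h$-boundary with $\mathcal A^h(\Sigma_p,\Omega_p) = L(\Pi_p)$. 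Next I would use the strict upper bound $L(\Pi_p) < 4\pi + 2\pi^2 h_0$ together with Proposition \ref{density}: since $\Sigma_p$ has $h_0$-bounded first variation (it is an $h$-boundary and $|h| < h_0$) and $\|V_{\Sigma_p}\|(S^3) = \mathcal A^h(\Sigma_p,\Omega_p) + \int_{\Omega_p} h \le L(\Pi_p) + h_0 \vol(S^3)$... — here I need to be slightly careful, because Proposition \ref{density} requires a bound on $\|V\|(S^3)$, not on $\mathcal A^h$. So the cleaner route is: if $\Sigma_p$ had a point of density $\geq 2$, Proposition \ref{density} would force $\|V_{\Sigma_p}\|(S^3) \geq 4\pi + 2\pi^2 h_0$, hence $\mathcal A^h(\Sigma_p,\Omega_p) = \|V_{\Sigma_p}\|(S^3) - \int_{\Omega_p} h \geq 4\pi + 2\pi^2 h_0 - h_0 \vol(S^3) = 4\pi$. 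Combined with... actually one must check this contradicts $L(\Pi_p) < 4\pi + 2\pi^2 h_0$; it does not immediately. The correct argument, as in the discussion preceding the proposition, is that density $\geq 2$ forces $\|V\|(S^3) \geq 4\pi + 2\pi^2 h_0$ and then $\mathcal A^h(\Sigma_p,\Omega_p) \geq \|V\|(S^3) - h_0 \vol(S^3) \geq 4\pi + 2\pi^2 h_0 - 2\pi^2 h_0 = 4\pi$, which is consistent — so the real contradiction must come from the fact that the $\mathcal A^h$ value equals $L(\Pi_p)$ and separately one bounds things so that touching is excluded. I expect the intended argument is simply: $L(\Pi_p) < 4\pi + 2\pi^2 h_0$ rules out density two because density two surfaces have $\mathcal A^h \geq$ (something $\geq 4\pi + 2\pi^2 h_0$) once one observes the enclosed volume is small — I would reconcile this by following the exact bookkeeping in \cite{sarnataro2023optimal}. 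The upshot is that $\Sigma_p$ has density one everywhere, hence no touching points, so Theorem \ref{regularity} applies and $\Sigma_p$ is a disjoint union of smoothly embedded two-spheres with prescribed mean curvature $h$.

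Now I would use the uniqueness hypothesis of the overall proof by contradiction: $(\Gamma,\Theta)$ is the \emph{only} smooth, oriented, embedded PMC two-sphere. A disjoint union of such spheres with multiplicity, bounding a Caccioppoli set, must — since any such component is a copy of $\Gamma$ and distinct parallel copies would create a region of the wrong mean curvature comparison or simply contradict uniqueness of the connected component — reduce to $(\Gamma,\Theta)$ itself. (I would note here that the genus bound from the Simon–Smith theory forces each component to be a two-sphere, and a multiplicity-one connected sphere is the only possibility consistent with $\mathcal A^h(\Sigma_p,\Omega_p) < 4\pi + 2\pi^2 h_0$ and the density-one conclusion.) Therefore $(\Sigma_p,\Omega_p) = (\Gamma,\Theta)$, and consequently $L(\Pi_p) = \mathcal A^h(\Sigma_p,\Omega_p) = \mathcal A^h(\Gamma,\Theta)$ for every $p \in S^3$. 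Finally, for the second assertion: since this reasoning applies to \emph{any} critical sequence $\{\Psi_i\}$ for $\Pi_p$, and in each case Theorem \ref{min-max} produces an element of $\mathcal K(\{\Psi_i\})$ which we have just identified with $(\Gamma,\Theta)$, we conclude $(\Gamma,\Theta) \in \mathcal K(\{\Psi_i\})$ for every critical sequence.

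The main obstacle I anticipate is the precise bookkeeping in the density-two exclusion step — reconciling the $\mathcal A^h$ bound $L(\Pi_p) < 4\pi + 2\pi^2 h_0$ with the mass bound in Proposition \ref{density} requires carefully tracking the sign and size of $\int_{\Omega_p} h$ and the enclosed volume, exactly as in \cite{sarnataro2023optimal}; this is where the explicit definition \eqref{h0def} of $h_0$ is essential. The rest (applying Theorems \ref{min-max} and \ref{regularity}, and invoking the uniqueness assumption to pin down $(\Sigma_p,\Omega_p)$) is routine given the tools assembled above.
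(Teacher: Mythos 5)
Your proposal matches the paper's proof step for step: apply Theorem \ref{min-max} to $\Pi_p$ using the bounds from Proposition \ref{width-bound}, exclude touching points via Proposition \ref{density} and the strict upper bound $L(\Pi_p) < 4\pi + 2\pi^2 h_0$, upgrade regularity via Theorem \ref{regularity}, identify $(\Sigma_p,\Omega_p) = (\Gamma,\Theta)$ by the uniqueness hypothesis, and note that this chain applies to an arbitrary critical sequence, which gives the second assertion. Your hesitation over the density-exclusion bookkeeping is reasonable --- Proposition \ref{density} controls $\|V\|(S^3)$ rather than $\mathcal A^h$, so one does need to track $\int_{\Omega_p} h$ --- but the paper itself states this step with the same terseness and simply cites Sarnataro--Stryker's Proposition 19.3, so deferring to that reference is exactly the paper's own move, and your overall argument is correct.
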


\subsection{Four-Parameter Min-Max} In the next step, we consider a 4-parameter min-max problem.  Define $X = S^3\times [\delta,\pi-\delta]$ and $Z = S^3\times \{\delta,\pi-\delta\}$.  Then define a map $\Phi\colon X\to \mathcal E$ by setting 
\[
\Phi(p,r) = \Phi_p(r) = (\bd B_p(r), B_p(r)). 
\] 
The map $\Phi$ simply collects all of the above sweepouts into a single family.  Let $\Pi$ be the $(X,Z)$-relative homotopy class of $\Phi$. From Proposition \ref{lift}, we know that $\Phi$ admits a lift to a map into the space of embeddings $\operatorname{Emb}(S^2,S^3)$.  Therefore, every map $\Psi\in \Pi$ also admits such a lift. 

\begin{prop}
The min-max value $L(\Pi)$ satisfies 
\[
\max\bigg\{\sup_{z\in Z} \mathcal A^h(\Phi(z)), 0 \bigg\} < L(\Pi) < 4\pi + 2 \pi^2 h_0. 
\]
\end{prop}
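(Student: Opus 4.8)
The plan is to reduce the estimate on $L(\Pi)$ to the corresponding estimate on the one-parameter min-max values $L(\Pi_p)$ proved in Proposition \ref{width-bound}, using the simple observation that $\Phi$ is assembled from the sweepouts $\Phi_p$ and that every $\Psi \in \Pi$ restricts, for each fixed $p$, to an element of $\Pi_p$ (after identifying the parameter interval). More precisely, for the upper bound I would simply note that $\sup_{x \in X} \mathcal A^h(\Phi(x)) = \sup_{p,r} \mathcal A^h(\bd B_p(r), B_p(r))$, and the computation in the proof of Proposition \ref{width-bound} shows this is strictly less than $4\pi + 2\pi^2 h_0$ (uniformly in $p$, since $\vol(S^3)$ and $\area(\bd B_p(r))$ do not depend on $p$ by homogeneity of the round sphere). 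Since $\Phi \in \Pi$, this gives $L(\Pi) \le \sup_{x \in X} \mathcal A^h(\Phi(x)) < 4\pi + 2\pi^2 h_0$.

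For the lower bound, I would argue as follows. The inequality $L(\Pi) > 0$ and $L(\Pi) > \sup_{z \in Z} \mathcal A^h(\Phi(z))$ both follow from a single mechanism. Fix any $\Psi \in \Pi$, coming from an isotopy $\psi \colon X \times [0,1] \to \diff(S^3)$ fixing $Z$. For each $p \in S^3$, the restriction $r \mapsto \Psi(p,r)$ lies in $\Pi_p$: indeed $\psi(p,\cdot,\cdot)$ is an admissible isotopy for the one-parameter problem since it restricts to the identity at $r = \delta$ and $r = \pi-\delta$. Hence by Proposition \ref{width-bound},
\[
\sup_{r \in [\delta,\pi-\delta]} \mathcal A^h(\Psi(p,r)) \ge L(\Pi_p) > \max\big\{\mathcal A^h(\bd B_p(\delta),B_p(\delta)), \mathcal A^h(\bd B_p(\pi-\delta),B_p(\pi-\delta)), 0\big\}.
\]
Taking the supremum over $p$ gives $\sup_{x \in X} \mathcal A^h(\Psi(x)) \ge \sup_{p} L(\Pi_p)$, and since this holds for every $\Psi \in \Pi$ we conclude $L(\Pi) \ge \sup_p L(\Pi_p)$. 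By Proposition \ref{one-parameter} (or directly from Proposition \ref{width-bound}), $L(\Pi_p) > 0$ for every $p$, which gives $L(\Pi) > 0$. Likewise $L(\Pi_p)$ strictly dominates $\mathcal A^h$ of both endpoint spheres over $p$, so $L(\Pi) > \sup_{z \in Z} \mathcal A^h(\Phi(z))$.

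I do not expect a serious obstacle here; the proof is essentially a bookkeeping argument that the multi-parameter width is sandwiched between the sup of the one-parameter widths and the sup of $\mathcal A^h$ along $\Phi$. The one point requiring a little care is the claim that restricting an admissible isotopy $\psi$ for $(X,Z)$ to a slice $\{p\} \times [\delta,\pi-\delta]$ yields an admissible isotopy for the one-parameter problem $\Pi_p$; this is immediate from the definitions since the conditions $\psi(x,0) = \operatorname{id}$ and $\psi(z,t) = \operatorname{id}$ for $z \in Z$ restrict to exactly the corresponding conditions for $\Pi_p$, and $\Phi(p,\cdot) = \Phi_p$ by construction. A minor subtlety is that the lower bound uses a strict inequality $L(\Pi_p) > \max\{\cdots\}$ while we only directly get $\sup_x \mathcal A^h(\Psi(x)) \ge \sup_p L(\Pi_p)$; but this is harmless because the infimum over $\Psi$ of a quantity bounded below by the fixed constant $\sup_p L(\Pi_p)$ is still bounded below by that constant, and $\sup_p L(\Pi_p)$ itself strictly exceeds $\max\{\sup_{z \in Z}\mathcal A^h(\Phi(z)), 0\}$ by Proposition \ref{width-bound} applied pointwise in $p$ together with compactness of $S^3$.
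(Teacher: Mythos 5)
Your proof is correct and takes essentially the same route as the paper: both reduce the four-parameter width to the one-parameter widths $L(\Pi_p)$ via the observation that restricting a competitor $\Psi \in \Pi$ to a slice $\{p\}\times[\delta,\pi-\delta]$ gives a competitor for $\Pi_p$, and both invoke Proposition \ref{width-bound}. The only cosmetic difference is that you close the strictness of the lower bound via compactness of $S^3$ (plus continuity in $p$), whereas the paper uses the uniform numerical bound $L(\Pi_p) \ge 4\pi - h_0\pi^2$ from the proof of Proposition \ref{width-bound} against the fact that $\sup_{z\in Z}\mathcal A^h(\Phi(z))$ is made arbitrarily small by the choice of $\delta$.
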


\begin{proof}
First note that 
\[
\sup_{z\in Z} \mathcal A^h(\Phi(z)) = \sup_{p\in S^3} \max \left\{\mathcal A^h(\bd B_p(\delta), B_p(\delta), \mathcal A^h(\bd B_p(\pi-\delta), B_p(\pi-\delta))\right\},
\]
and that this can be made arbitrarily close to 0 by choosing $\delta$ small enough. Since 
\[
L(\Pi) \ge L(\Pi_p) \ge 4\pi - h_0 {\pi^2} > 0,
\]
we obtain the lower bound. The upper bound follows exactly as in Proposition \ref{width-bound} since every element in the image of $\Phi$ is of the form $(\bd B_p(r), B_p(r))$ for some $p\in S^3$ and $r \in [\delta,\pi-\delta]$. 
\end{proof} 

We now apply Theorem \ref{min-max} to the homotopy class $\Pi$.  This produces a $C^{1,1}$ $h$-boundary $(\Sigma,\Omega)$ satisfying $\mathcal A^h(\Sigma,\Omega) = L(\Pi)$.  Again, since $L(\Pi) < 4\pi + 2 \pi^2 h_0$, it follows from Proposition \ref{density} that $\Sigma$ has no touching points, and hence $\Sigma$ is a disjoint union of smooth, embedded two-spheres with mean curvature given by $h$.  Thus we have $(\Sigma,\Omega) = (\Gamma,\Theta)$ and it follows that $L(\Pi) = \mathcal A^h(\Gamma,\Theta)$.  

Finally, we are going to use the fact that $L(\Pi) = L(\Pi_p)$ for all $p\in S^3$ to deduce a contradiction.   Choose a critical sequence $\{\Psi_i\}$ for $\Pi$.   
Let $\eps(\Gamma) > 0$ be the number associated to $\Gamma$ by Theorem \ref{interpolation}.  The following proposition can be proved by following an argument of Marques-Neves \cite{marques2014min} almost verbatim.  We provide an alternative proof that elucidates the connection with the cup product argument in Lusternik-Schnirelmann theory. 

\begin{prop} 
\label{LS} We can find a large number $i$ and a subcomplex $Y$ of $X$ with no boundary satisfying the following properties: 
\begin{itemize}
\item[(i)] $[Y] = [S^3 \times \{\frac{\pi}{2}\}]$ in $H_3(X,\Z_2)$,
\item[(ii)] for every $y\in Y$, we have $\mathbf F(\Psi_i(y),\Gamma) < \eps(\Gamma)$.
\end{itemize}
\end{prop}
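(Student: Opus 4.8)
The idea is to run a Lusternik--Schnirelmann/cup-product argument in the style of Marques--Neves \cite{marques2014min}. Consider the open set
\[
U = \{x \in X : \mathbf F(\Psi_i(x),\Gamma) < \eps(\Gamma)\} \subset X,
\]
for a critical sequence $\{\Psi_i\}$ and a large index $i$ to be chosen. Since $L(\Pi) = L(\Pi_p)$ for every $p$ by Proposition \ref{one-parameter}, the restriction $r \mapsto \Psi_i(p,r)$ is ``almost optimal'' for $\Pi_p$, and hence by Proposition \ref{one-parameter} (and since $(\Gamma,\Theta)$ is the unique $C^{1,1}$ $h$-boundary in the critical set with the right area), for $i$ large the slice $\{p\}\times[\delta,\pi-\delta]$ must contain a point $x$ with $\mathbf F(\Psi_i(x),\Gamma) < \eps(\Gamma)$; otherwise one could extract a min-max sequence for $\Pi_p$ whose critical set avoids the $\eps$-neighborhood of $\Gamma$, contradicting that $(\Gamma,\Theta)$ must lie in that critical set. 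Thus $U$ surjects onto $S^3$ under the projection $\pi_1\colon X \to S^3$.

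\textbf{Key steps.} First I would make the previous paragraph quantitative: fix a countable dense set, argue by contradiction that for all large $i$ and all $p$ the fiber $\{p\}\times[\delta,\pi-\delta]$ meets $U$, using compactness of $S^3$ and the fact that $\mathbf F$-convergence along min-max sequences forces the limit into the critical set. Second, I would use this to show that the map $\pi_1|_U \colon U \to S^3$ is surjective and, more importantly, that the induced map $H^3(S^3,\Z_2) \to H^3(U,\Z_2)$ is nonzero: a section-like construction over $S^3$ (choosing, for each $p$, a nonempty compact interval of $U$-points in the fiber) shows that $\pi_1|_U$ admits enough of a section for $\pi_1^*$ to be injective on $H^3$. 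Concretely, pulling back a generating $1$-cochain doesn't suffice directly in degree $3$; instead one observes $[S^3\times\{\tfrac\pi2\}]$ pairs nontrivially with $\pi_1^*(\mathrm{vol}_{S^3})$, and this class restricts nontrivially to $U$ because $U$ contains a ``horizontal'' copy of $S^3$ up to homology --- more precisely, because $\pi_1|_U$ is surjective with connected fibers (each fiber-slice of $U$ is an open subset of an interval, hence a disjoint union of intervals, and one can refine $U$ to a subset with connected fibers by a further selection). Third, having established $\pi_1^*\colon H^3(S^3,\Z_2) \hookrightarrow H^3(U,\Z_2)$, standard cubical approximation lets me replace $U$ by a subcomplex $U'$ of a fine subdivision of $X$ with $U' \subset U$ and $\pi_1^*$ still injective on $H^3(U',\Z_2)$. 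Then by Poincar\'e--Lefschetz duality in the manifold-with-boundary $X$ (or directly by the structure of $U'$ as a subcomplex), the class in $H_3(U',\Z_2)$ dual to $\pi_1^*(\mathrm{vol})$ is represented by a cycle $Y \subset U'$ with $[Y] = [S^3\times\{\tfrac\pi2\}]$ in $H_3(X,\Z_2)$; since $Y \subset U' \subset U$, property (ii) holds automatically, and one can arrange $Y$ to be a subcomplex with no boundary by taking it to be the support of this cycle.

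\textbf{Main obstacle.} The delicate point is the topological bookkeeping that turns ``$U$ surjects onto $S^3$ with interval fibers'' into ``$U$ carries a $3$-cycle homologous to the horizontal $S^3$.'' In Marques--Neves this is packaged via the cup-length and a covering argument; here the fibered structure $X = S^3 \times I$ is an advantage, and the cleanest route is: the projection $\pi_1|_U\colon U \to S^3$ is a surjection whose fibers are nonempty disjoint unions of open intervals, so by selecting (continuously off a measure-zero bad set, then fixing up) a point in one such interval per fiber --- or better, by a Mayer--Vietoris / spectral-sequence argument for the map $\pi_1|_U$ --- one gets that $\pi_1^*$ is injective in top degree. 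I expect the bulk of the work, and the only place requiring genuine care, to be verifying that this selection can be made through a genuine singular (equivalently, after subdivision, cubical) $3$-cycle and that its class is exactly $[S^3\times\{\tfrac\pi2\}]$ rather than a multiple or a sum with lower-dimensional debris; this is where I would invoke, essentially verbatim, the Lusternik--Schnirelmann argument of \cite{marques2014min}, or the cleaner reformulation sketched above in which one works with the cohomology class $\pi_1^*(\mathrm{vol}_{S^3}) \in H^3(X,U^c;\Z_2)$ and its image in $H^3(X,\Z_2)$, using that it restricts to $0$ on $X \setminus U$ while pairing nontrivially with $[X,Z]$.
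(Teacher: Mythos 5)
The proposal follows the same general Lusternik--Schnirelmann strategy as the paper, but there is a genuine gap at the crucial topological step. You establish, by applying Proposition \ref{one-parameter} to the vertical slices $r\mapsto\Psi_i(p,r)$, only that the set $U=\{x:\mathbf F(\Psi_i(x),\Gamma)<\eps(\Gamma)\}$ meets every fiber $\{p\}\times[\delta,\pi-\delta]$. This is strictly weaker than what the argument actually needs, which is that the complement $W_i=X\setminus\mathcal T_i$ \emph{separates} the two endpoints $(p,\delta)$ and $(p,\pi-\delta)$ of each fiber, i.e.\ that $[\{p\}\times\bd I]$ is nonzero in $H_0(W_i,\Z_2)$. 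The two are not equivalent, since a path in $W_i$ between the endpoints is free to leave the fiber $\{p\}\times I$. Your inference ``$U$ surjects onto $S^3$ (with connected fiber slices), hence $\pi_1^*$ is injective on $H^3$, hence a suitable cycle $Y\subset U$ exists'' is not valid in general: in $S^1\times(0,1)$, a thin tubular neighbourhood $U$ of an embedded arc from $(\theta_0,0.4)$ to $(\theta_0,0.6)$ that winds once around in the $S^1$-direction meets every fiber, misses the boundary, and yet is contractible, so $H_1(U)\to H_1(S^1\times(0,1))$ is zero and no such cycle exists; here $W=X\setminus\bar U$ is connected, so the fiber endpoints are \emph{not} separated. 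The ``further selection'' to connected fibers that you allude to fails for exactly this monodromy reason, and while $\pi_1(S^3)=0$ removes that particular obstruction, the claim as you state it is false and the appeal to ``a Mayer--Vietoris / spectral-sequence argument'' does not by itself repair it.

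What the paper does instead, and what is missing from your proposal, is a second application of the one-parameter min-max input at a different place: they consider an \emph{arbitrary} path $\gamma_i$ in $W_i$ joining $(p,\delta)$ to $(p,\pi-\delta)$. Because $L(\Pi)=L(\Pi_p)$ and $\Psi_i\in\Pi$ restricts on such a path to an admissible sweepout in $\Pi_p$, the sequence $\{\Psi_i\circ\gamma_i\}$ is critical for $\Pi_p$, so Proposition \ref{one-parameter} forces some $\Psi_i(\gamma_i(t_i))$ to be $\mathbf F$-close to $\Gamma$, contradicting $\gamma_i\subset W_i$. This is precisely the step that rules out the ``winding'' configuration above and yields $[\{p\}\times\bd I]\neq 0$ in $H_0(W_i)$; it feeds into the relative cup-product / Lefschetz-duality contradiction (which you gesture at in your last sentence) via the class $\beta_W\in H^1(X,W_i)$. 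Until you apply the min-max theorem to arbitrary paths in $W_i$ rather than only to vertical fibers, the argument does not close.
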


\begin{proof}
Choose a small number $\eps < \frac{\eps(\Gamma)}{2}$.  Then for each $i$, let $X(\ell_i)$ be a very fine refinement of $X$.   Let $\mathcal T_i$ be the union of the closed 4-dimensional cells $c$ in $X(\ell_i)$ for which there is a point $x  \in c$ with $\mathbf F(\Psi_i(x),\Gamma) \le \eps$.  Assuming $\ell_i$ is large enough, we can ensure that $\mathbf F(\Psi_i(y),\Gamma) \le 2\eps$ for all $y$ in such a cell $c$.  Thus we have 
\[
\mathbf F(\Psi_i(y),\Gamma)\le 2\eps < \eps(\Gamma)
\]
 for all $y\in \mathcal T_i$.   

We will use homology and cohomology with $\Z_2$ coefficients throughout the proof.  We define two open subsets of $X$.  First, since $\mathcal T_i$ is a subcomplex, we can find a small neighborhood $U_i$ of $\mathcal T_i$ which deformation retracts to $\mathcal T_i$.   Second, define $W_i = X\setminus \mathcal{T}_i$.  By choosing $\ell_i$ sufficiently large, it is possible to ensure that $U_i \cap Z = \emptyset$ and $Z \subset W_i$. 
To prove the proposition, it suffices to show there is a homogeneous 3-dimensional subcomplex $Y$ of $\mathcal T_i$ with no boundary such that $[Y] = [S^3\times \{\frac{\pi}{2}\}]$ in $H_3(X)$. 

In other words, we want to show that $[S^3\times \{\frac{\pi}{2}\}]$ is in the image of the natural map $H_3(\mathcal T_i) \to H_3(X)$.  It is equivalent to show that $[S^3\times \{\frac{\pi}{2}\}]$ is in the image of the map $H_3(U_i) \to H_3(X)$.  Suppose for contradiction that this is not the case. Then by the long exact sequence in homology, $[S^3\times \{\frac{\pi}{2}\}]$ maps to a non-zero class $u = j_*[S^3\times \{\frac{\pi}{2}\}]$ in $H_3(X, U_i)$.   Since we are using $\Z_2$ coefficients, we have $H^3(X,U_i)\cong \operatorname{Hom}(H_3(X,U_i),\Z_2)$.  Choose a basis for $H_3(X,U_i)$ which includes $u$.  Then define $\alpha_{U} \in H^3(X,U_i)$ to be the element that evaluates to 1 on $u$ and 0 on all other members of the basis.  Now consider the maps 
\begin{gather*}
j_*\colon H_3(X) \to H_3(X,U_i),\\
j^*\colon H^3(X,U_i) \to H^3(X). 
\end{gather*}
Observe that 
\[
j^*\alpha_U\left(\left[S^3\times \left\{\frac{\pi}{2}\right \}\right ]\right ) = \alpha_U\left(j_*\left[S^3\times \left\{\frac{\pi}{2}\right \}\right ]\right) = 1. 
\]
Therefore $j^*\alpha_U$ is the non-zero element $\alpha \in H^3(X)$. 

Now let $I = [\delta,\pi-\delta]$ so that $\bd I = \{\delta,\pi-\delta\}$.  Fix any point $p\in S^3$.  Since $Z\subset W_i$, we can consider the class $[\{p\}\times \bd I]\in H_0(W_i)$. We claim that this class is non-zero for sufficiently large $i$.  Indeed, if this is not the case, then for a (not relabeled) subsequence of $i$'s going to infinity we can find a continuous path $\gamma_i\colon [\delta,\pi-\delta] \to W_i$ with $\gamma_i(\delta) = (p,\delta)$ and $\gamma_i(\pi-\delta) = (p,\pi-\delta)$.  The fact that $L(\Pi) = L(\Pi_p)$ implies that the sequence $\{\Psi_i\circ \gamma_i\}$ is a critical sequence for $\Pi_p$.  Therefore, by Proposition \ref{one-parameter}, there must be a sequence of times $t_i$ such that $\Psi_i(\gamma_i(t_i))$ converges in the $\mathbf F$ metric to $\Gamma$. Since $W_i = X\setminus \mathcal T_i$, this contradicts the definition of $\mathcal T_i$, and the claim follows. 

Now we argue similarly to above.  Observe that $[\{p\}\times I]$ maps to $[\{p\}\times \bd I]$ under the boundary homomorphism $H_1(X,W_i)\to H_0(W_i)$.  Since $[\{p\}\times \bd I]$ is non-zero in $H_0(W_i)$, it follows that $[\{p\}\times I]$ is non-zero in $H_1(X,W_i)$.  Choose a basis for $H_1(X,W_i)$ which includes $[\{p\}\times I]$, and then define $\beta_W \in H^1(X,W_i)$ to be the class which evaluates to $1$ on $[\{p\}\times I]$ and zero on all other elements of the basis. Since $Z \subset W_i$, we can consider the maps 
\begin{gather*}
j_*\colon H_1(X,Z) \to H_1(X,W_i),\\
j^*\colon H^1(X,W_i) \to H^1(X,Z). 
\end{gather*}
Observe that 
\[
j^*\beta_W([\{p\}\times I]) = \beta_W(j_*[\{p\}\times I]) = 1
\]
and so $j^*\beta_W$ is the non-zero element $\beta\in H^1(X,Z)$. 

Finally, observe that the relative cup product 
\[
\alpha_U \smile \beta_W \in H^4(X,U_i\cup W_i)
\]
must be zero since $U_i\cup W_i = X$.  On the other hand, by naturality, $\alpha_U \smile \beta_W$ maps to $\alpha \smile \beta$ under the map 
\[
H^4(X,X) \to H^4(X,Z). 
\]
Therefore $\alpha \smile \beta$ is zero.  But now observe that $\alpha \in H^3(X)$ is the Lefschetz dual of $[\{p\}\times I] \in H_1(X,Z)$ and $\beta \in H^1(X,Z)$ is the Lefschetz dual of $[S^3\times \{\frac \pi 2\}]\in H_3(X)$.  It is well-known that the cup product is dual to intersection.  Since $\{p\}\times I$ and $S^3\times \{\frac \pi 2\}$ intersect transversally in a single point, it follows that $\alpha \smile \beta \in H^4(X,Z)$ is non-zero.  This is a contradiction, and the proof is complete.
\end{proof} 

Finally, we can conclude the proof of Theorem \ref{main}. Choose a large number $i$ and a subcomplex $Y$ of $X$ satisfying the conclusion of Proposition \ref{LS}.  According to Theorem \ref{interpolation}, there is a homotopy $H\colon Y\times [0,1] \to \mathcal E$ such that 
\begin{itemize}
\item[(i)] $H(y,0) = \Psi_i(y)$, and
\item[(ii)] $H(y,1) = \Gamma$, 
\end{itemize}
for all $y\in Y$. 
The existence of such a homotopy implies that $(\Psi_i)_* [Y] = 0$ in $H_3(\mathcal E,\Z_2)$.  On the other hand, we have $[Y] = [S^3\times \{\frac{\pi}{2}\}]$ in $H_3(X,\Z_2)$ and so 
\[
\Phi_*\left[S^3 \times \left\{\frac {\pi}{2}\right\}\right]  =  (\Psi_i)_*\left[S^3\times \left\{\frac{\pi}{2}\right\}\right] = (\Psi_i)_*[Y] = 0
\]
since $\Psi_i$ is homotopic to $\Phi$.  But $\Phi_*[S^3 \times \{\frac \pi 2\}]$ represents the non-trivial element in $H_3(\mathcal E,\Z_2)$ by Corollary \ref{homology}.  This is a contradiction, and the result follows. 

\bibliographystyle{plain}
\bibliography{biblio.bib}

\end{document}